\newtheorem{theorem}{Theorem}[section]
\newtheorem{corollary}[theorem]{Corollary}
\newtheorem{lemma}[theorem]{Lemma}
\newtheorem{question}[theorem]{Question}
\newtheorem{proposition}[theorem]{Proposition}
\theoremstyle{definition}
\newtheorem{definition}[theorem]{Definition}
\newtheorem{remark}[theorem]{Remark}
\numberwithin{equation}{section}
\newcommand{\ppi}{\boldsymbol{\pi}}
\newcommand{\ww}{\mathbf{w}}
\newcommand{\zz}{\mathbb{Z}}
\newcommand{\LLL}{\mathcal{L}}
\newcommand{\LL}{\mathcal{L}}
\newcommand{\GGG}{\mathcal{G}}
\newcommand{\CCC}{\mathcal{C}}
\newcommand{\MMM}{\mathcal{M}}
\newcommand{\FFF}{\mathcal{F}}
\newcommand{\BBB}{\mathcal{B}}
\newcommand{\DDD}{\mathcal{D}}
\newcommand{\NN}{\mathbb{N}}
\newcommand{\ph}{\varphi}
\newcommand{\RR}{\mathbb{R}}
\newcommand{\ZZ}{\mathbb{Z}}
\newcommand{\abs}[1]{\left\lvert#1\right\rvert}
\newcommand{\defn}[1]{\textbf{#1}}
\DeclareMathOperator{\Per}{Per}
\begin{document}

\title[One-sided almost specification and intrinsic ergodicity]{One-sided almost specification and intrinsic ergodicity}

\begin{abstract}

We define a new property called \emph{one-sided almost specification}, which lies between the properties of specification and almost specification, and prove that it guarantees intrinsic ergodicity (i.e. uniqueness of the measure of maximal entropy) if the corresponding mistake function $g$ is bounded.  We also show that uniqueness may fail for unbounded $g$ such as $\log\log n$. Our results have consequences for almost specification: we prove that almost specification with $g \equiv 1$ implies one-sided almost specification (with $g \equiv 1$), and hence uniqueness. On the other hand, the second author showed recently that almost specification with $g \equiv 4$ does not imply uniqueness. 

\end{abstract}

\date{\today}
\author{Vaughn Climenhaga}
\address{Vaughn Climenhaga\\
Department of Mathematics\\
University of Houston\\
4800 Calhoun St.\\
Houston, TX 77204}
\email{climenha@math.uh.edu}
\urladdr{http://www.math.uh.edu/$\sim$climenha/}
\thanks{}
\author{Ronnie Pavlov}
\address{Ronnie Pavlov\\
Department of Mathematics\\
University of Denver\\
2280 S. Vine St.\\
Denver, CO 80208}
\email{rpavlov@du.edu}
\urladdr{www.math.du.edu/$\sim$rpavlov/}
\thanks{V.C.\ is partially supported by NSF grant DMS-1362838.  R.P.\ is partially supported by NSF grant DMS-1500685.}
\keywords{Measure of maximal entropy, specification, subshift}
\renewcommand{\subjclassname}{MSC 2010}
\subjclass[2010]{Primary: 37B10; Secondary: 37B40, 37D35}
\maketitle

\section{Introduction}

The notion of ``entropy'' in dynamics can be defined as an invariant of a map preserving a probability measure (measure-theoretic entropy) and of a continuous map on a compact metric space (topological entropy).  These are related by the variational principle, which states that the topological entropy of a topological dynamical system $(X,T)$ is the supremum of the measure-theoretic entropies taken over all probability measures preserved by $T$.  A measure on $X$ that achieves this supremum is called a \textbf{measure of maximal entropy (MME)}.  If there is a unique MME, the system $(X,T)$ is said to be \textbf{intrinsically ergodic} \cite{bW70}.

Existence and uniqueness of MMEs is a central question in thermodynamic formalism, which relates ergodic theory and topological dynamics.  It is often the case that the unique MME for an intrinsically ergodic system has strong statistical properties, and related thermodynamic considerations (equilibrium states for non-zero potentials) are connected to properties of `physical measures' for smooth systems.  

We study intrinsic ergodicity in the context of symbolic dynamics over a finite alphabet, where existence of an MME is automatic, and so the real question is uniqueness.  Intrinsic ergodicity for mixing subshifts of finite type was proved by Parry \cite{wP64} using Perron--Frobenius theory; a different proof for subshifts with the \textbf{specification property} (which includes the class of mixing SFTs) was given by Bowen \cite{rB74}.  This property requires that any sequence of orbit segments can be shadowed by a single orbit that transitions from each segment to the next in uniformly bounded time; in symbolic dynamics, orbit segments correspond to \textbf{words} in the language of the shift.

Recently a number of weakened versions of the specification property have been used to study various questions in ergodic theory.  This includes \textbf{almost specification} \cite{kwietniaketal,pavlovAS,pfister-sullivan2,pfister-sullivan,thompson}, which allows one to concatenate arbitrary words in the language into a new word in the language if a ``small'' number of letters are allowed to change in each word.  The number of changes is controlled by a sublinear ``mistake function'' $g(n)$; see Section~\ref{defs} for a formal definition.  Almost specification is enough to establish various results in large deviation theory \cite{pfister-sullivan} and multifractal analysis \cite{pfister-sullivan2,thompson}, but it was unknown for some time whether almost specification also implies intrinsic ergodicity.  This question was answered in the negative by (independently) \cite{KOR} and \cite{pavlovAS}; in fact, \cite[Theorem 1.2]{pavlovAS} shows that intrinsic ergodicity may fail even with the constant mistake function $g \equiv 4$. (Here and elsewhere, $g \equiv C$ means that $g$ is the constant function $C$.)

One motivation for almost specification is the fact that many natural examples satisfy it for small $g$, such as the classical 
$\beta$-shifts, which have almost specification with $g \equiv 1$ and do satisfy intrinsic ergodicity. In fact, $\beta$-shifts satisfy a slightly stronger property; when one wishes to concatenate some words $w^{(1)}$, $\ldots$, $w^{(n)}$ together, it suffices to make the permitted number of changes to the words $w^{(1)},\dots, w^{(n-1)}$, and leave the final word $w^{(n)}$ untouched.   Though this might seem like an incremental strengthening, it proves quite important. We call this stronger notion \textbf{left almost specification (LAS)}; our main result is that this property actually does imply intrinsic ergodicity if the mistake function $g$ is bounded.

\begin{theorem}\label{thm:las-main}
If $X$ is a subshift with left almost specification for a bounded mistake function, then it has a unique measure of maximal entropy $\mu$.  Moreover, $\mu$ is the limiting distribution of periodic orbits; finally, the system $(X,\sigma,\mu)$ is Bernoulli, and has exponential decay of correlations and the central limit theorem for H\"older observables.
\end{theorem}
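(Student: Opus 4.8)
The plan is to reduce the theorem to two core assertions—uniqueness of the MME and equidistribution of periodic orbits—and then to obtain the statistical conclusions (Bernoullicity, exponential decay of correlations, and the CLT for H\"older observables) by feeding a sufficiently strong Gibbs property into the standard machinery for such systems, namely a Bowen-type argument together with Ornstein theory (very weak Bernoullicity) for the Bernoulli property, and the spectral/Gibbs framework for decay of correlations and the CLT. Throughout I write $h = h_{\mathrm{top}}(X)$, let $A$ be the alphabet, and set $M := \sup_n g(n) < \infty$, the key quantity controlled by the boundedness hypothesis.

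First I would establish the counting estimates that left almost specification provides. Applying the property to a pair $(u,w)$ with $|u|=m$ and $|w|=n$ yields a word $\tilde u w \in \mathcal{L}$ with $\tilde u$ differing from $u$ in at most $M$ positions; since $\tilde u w$ recovers $w$ exactly and recovers $u$ up to at most $\binom{m}{M}|A|^M$ choices, the concatenation map $\mathcal{L}_m \times \mathcal{L}_n \to \mathcal{L}_{m+n}$ has fibers of polynomial size, giving $|\mathcal{L}_m|\,|\mathcal{L}_n| \le \binom{m}{M}|A|^M\,|\mathcal{L}_{m+n}|$. Together with the trivial submultiplicativity $|\mathcal{L}_{m+n}|\le |\mathcal{L}_m|\,|\mathcal{L}_n|$, a Fekete-type iteration yields the two-sided bound $e^{nh}\le |\mathcal{L}_n|\le P(n)\,e^{nh}$ for a polynomial $P$. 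It is here that boundedness of $g$ is decisive: the factor $\binom{m}{M}$ is only polynomial precisely because $M$ is constant, whereas an unbounded $g$ makes it superpolynomial—exactly what should allow the $\log\log n$ counterexample.

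Next I would construct the candidate MME $\mu$ as a weak-$*$ limit of the uniform measures on $\mathcal{L}_n$ (or of periodic-orbit measures) and extract its Gibbs property. The asymmetry of the hypothesis is central: because one may prepend a modified copy of an arbitrary word while leaving the final word untouched, the natural estimates control the number of length-$N$ words \emph{ending} in a fixed $w$, and hence the mass $\mu$ assigns to a cylinder $[w]$ read at the right end of a long word. The upper Gibbs bound $\mu([w]) \le P(n)\,e^{-nh}$ follows directly from the counting estimate; the matching lower bound, and the quality of its constant, is the delicate point.

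The heart of the argument—and the step I expect to be the main obstacle—is to upgrade these subexponential bounds into a genuine uniqueness statement by using one-sidedness in an essential way. Two-sided Gibbs bounds with subexponential constants cannot by themselves suffice, since the second author's $g\equiv 4$ example has precisely such bounds yet two distinct MMEs; the extra input must come from the ``leave the last word alone'' clause. Concretely, I would aim to prove a \emph{uniform} one-sided Gibbs property for $\mu$, of the form $\mu([uw]) \ge K^{-1}\mu([u])\,\mu([w])$, expressing that the conditional probability of a future word $w$ given any past is comparable, with a constant independent of the past, to $\mu([w])$; the freedom to prepend an arbitrary past to $w$ at bounded cost is what should produce this uniformity, and the challenge is to prevent the combinatorial factor $\binom{m}{M}$ from degrading the constant as the past lengthens. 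Granting such uniform control, uniqueness follows by a Bowen-type comparison: for any ergodic MME $\nu$, the Shannon--McMillan--Breiman theorem gives $\nu([x_0\cdots x_{n-1}]) = e^{-nh+o(n)}$ on a set of full $\nu$-measure, and the uniform one-sided bound forces $\nu$ to be absolutely continuous with respect to $\mu$, whence $\nu=\mu$ by ergodicity. The same one-sided conditional uniformity then drives the statistical conclusions—asymptotic independence of the future from the past for very weak Bernoullicity, and uniform mixing for exponential decay and the CLT. Finally, for the periodic-orbit statement I would use the property to produce enough periodic points, concatenating many copies of a word and applying a pigeonhole argument over the finitely many bounded mistake patterns to extract a genuinely periodic orbit shadowing it, thereby obtaining $|\mathrm{Per}_n| = e^{nh+o(n)}$, and then show that the associated empirical distributions converge to $\mu$ via the same Gibbs estimates.
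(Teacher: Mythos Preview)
Your counting estimate is correct and matches the paper's Lemma~\ref{upperbound}, and you correctly identify that the one-sidedness must be exploited in an essential way. However, there is a genuine gap at the step you yourself flag as ``the main obstacle'': you propose to prove a uniform one-sided Gibbs inequality $\mu([uw]) \ge K^{-1}\mu([u])\mu([w])$ with $K$ independent of $|u|$, but you do not give a mechanism for obtaining it, and the tools you have assembled do not suffice. Each application of LAS costs a polynomial factor in the length of the word being modified, so iterating LAS to build long words degrades the constant exactly as you fear; there is no evident way to absorb this into a fixed $K$. The uniqueness argument, the Bernoullicity argument, and the decay/CLT arguments all rest on this unproved uniformity, so the entire back half of the sketch is unsupported.

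The paper's route is structurally different and avoids this trap. Rather than seeking uniform Gibbs bounds directly, it (i) passes to the measure center to get irreducibility, (ii) proves \emph{entropy minimality} from LAS plus irreducibility (Theorem~\ref{LASentmin}), which is itself a substantial argument using the polynomial counting bound and a result from \cite{pavlovgap}, and then (iii) uses entropy minimality to find specific words $u,u',v$ with the property that \emph{any} word starting with $v$ and legally followed by $u$ can be glued---with no changes at all---via $u'$ to any other such word. This produces a sub-language $\GGG$ with \emph{exact} specification (condition~\ref{spec}), together with prefix/suffix collections $\CCC^p,\CCC^s$ whose entropy is strictly below $h(X)$ (condition~\ref{gap}, which is where entropy minimality is used). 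Conditions~\ref{inter}--\ref{union} are then easy to check, and the uniqueness and all statistical properties are imported wholesale from the machinery of \cite{climenhaga1} (Theorem~\ref{thm:towers}). The key insight you are missing is that one should not try to control the Gibbs constants for arbitrary words, but rather isolate a large sub-language on which almost specification upgrades to genuine specification; the ``stabilizing the mistake pattern'' argument around \eqref{eqn:uu'} is what makes this possible.
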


We prove Theorem \ref{thm:las-main} in \S\ref{sec:main-pf}.  This theorem covers the case when $X$ has the usual specification property (see Lemma \ref{LASspec}).  We show in \S\ref{sec:corollary-pf} that it also covers the case when $X$ has the usual almost specification property with $g \equiv 1$, and deduce the following.

\begin{corollary}\label{thm:as-main}
If $X$ is a subshift with almost specification for the mistake function $g \equiv 1$, then it has a unique measure of maximal entropy $\mu$, and this measure satisfies the conclusions of Theorem~\ref{thm:las-main}.
\end{corollary}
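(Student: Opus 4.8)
The plan is to prove Corollary~\ref{thm:as-main} by showing that the hypothesis (almost specification with $g \equiv 1$) implies the hypothesis of Theorem~\ref{thm:las-main} (left almost specification with a bounded mistake function); once that implication is established, every conclusion—uniqueness of the MME, the periodic-orbit limit, and the Bernoulli/statistical properties—is inherited directly from Theorem~\ref{thm:las-main} with no further work. So the entire content of the corollary is the reduction ``almost specification with $g\equiv 1$ $\Rightarrow$ LAS with bounded $g$,'' and this is exactly the implication the introduction announces will be proved in \S\ref{sec:corollary-pf}.

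The key asymmetry to exploit is that ordinary almost specification lets us modify \emph{every} word in a concatenation $w^{(1)},\dots,w^{(n)}$, whereas LAS demands that the final word $w^{(n)}$ be left completely untouched. The strategy is therefore as follows. First I would fix the precise definitions in Section~\ref{defs} of both properties and pin down what ``$g\equiv 1$'' allows: each word may have at most one symbol changed (or, depending on the exact formulation, an approximation within Hamming/edit-type distance $1$). Next, given words $w^{(1)},\dots,w^{(n)}$ that I wish to glue together under the LAS rule, I would apply the full almost specification property to produce a point in $X$ that shadows all of them with at most one change per word; the resulting valid word $u$ agrees with $w^{(n)}$ everywhere except possibly one coordinate. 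The crux is then to repair that single mismatched coordinate in the last word without spoiling membership in the language and without increasing the number of changes in the earlier words beyond a bounded amount—thereby converting the almost-specification concatenation into a genuine LAS concatenation at the cost of a bounded mistake function (some constant $C$ independent of $n$ and of the words).

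The step I expect to be the main obstacle is precisely this ``repair the last word'' argument: moving the single allowed mistake out of $w^{(n)}$ and absorbing it into the gluing region or into the earlier words while keeping the whole concatenation admissible. The natural approach is to re-run almost specification on a slightly modified list of words—for instance by appending a short buffer to $w^{(n)}$ or by treating a prefix of $w^{(n)}$ as part of the transition zone—so that the unavoidable error falls before the copy of $w^{(n)}$ rather than inside it; one must then check that the total number of alterations to each of $w^{(1)},\dots,w^{(n-1)}$ stays bounded by an absolute constant, which is what yields a \emph{bounded} LAS mistake function and hence lets Theorem~\ref{thm:las-main} apply. I would carry out the verification for finitely many words first and then confirm the bound is uniform in $n$, since Theorem~\ref{thm:las-main} requires the bound to be independent of the number of words being concatenated.

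Once the reduction is complete, I would simply invoke Theorem~\ref{thm:las-main} to conclude, noting that no part of the argument requires anything about $X$ beyond the gluing structure, so all listed consequences transfer verbatim.
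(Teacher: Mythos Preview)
Your proposal has a genuine gap: the implication ``almost specification with $g\equiv 1$ $\Rightarrow$ LAS with bounded $g$'' is not what the paper establishes, and the ``repair the last word'' step you flag as the main obstacle does not actually go through. Applying AS to $w^{(1)},w^{(2)}$ gives only $\bar w^{(1)}\bar w^{(2)}\in\mathcal L(X)$ with one change permitted in \emph{each} block; nothing in the definition lets you force that change off of the final block, and re-running AS on a padded or buffered list does not help, since the rule again allows a change anywhere in whatever you declare to be the last word. The paper never claims AS with $g\equiv 1$ yields LAS. Instead, Lemma~\ref{g=1} proves a \emph{dichotomy}: an irreducible subshift with AS and $g\equiv 1$ has either LAS with $g\equiv 1$ or RAS with $g\equiv 1$ (the proof is a short contradiction---if both one-sided properties fail, one builds a triple of words that cannot be glued with a single change in the middle word). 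The RAS case is then handled not by converting to LAS but by reflecting: the map $\rho(x)(n)=x(-n)$ sends $X$ to a subshift $\rho(X)$ with LAS and $g\equiv 1$, Theorem~\ref{thm:las-main} is applied to $\rho(X)$, and the conclusions transfer back because $\rho$ induces an entropy-preserving bijection on invariant measures and on periodic points.

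There is a second omission. The dichotomy in Lemma~\ref{g=1} requires irreducibility (it is used to construct the word $vwU$), and AS by itself does not give irreducibility. The paper first passes to the measure center $\hat X$ via Theorem~\ref{measure-center-2}, which is irreducible, retains AS with the same mistake function, and has the same simplex of invariant measures and the same periodic points as $X$; only then is Lemma~\ref{g=1} invoked. Your outline skips this reduction entirely.
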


Together with the result from \cite{pavlovAS} that $g \equiv 4$ is not enough to guarantee uniqueness, we are led to the following open problem.

\begin{question}\label{q:23}
Does almost specification with mistake function $g \equiv 2$ or $g \equiv 3$ guarantee intrinsic ergodicity?
\end{question}

We observe that the approach of Corollary \ref{thm:as-main} does not immediately address Question \ref{q:23}, 
since once we allow $g > 1$ we can no longer deduce one-sided almost specification.

Returning to shifts with LAS, we observe that the requirement of bounded mistake function cannot be relaxed too far; we prove that even doubly logarithmic growth of $g$ does not imply intrinsic ergodicity. 

\begin{theorem}\label{doublelogex}
There is a subshift $X$ with LAS with $g(n) = 1 + 2 \lfloor \log_2 \log_2 n \rfloor$ and multiple measures of maximal entropy (with disjoint supports).
\end{theorem}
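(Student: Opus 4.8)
The plan is to build $X$ by hand as a disjoint union $X = Y_0 \sqcup Y_1$ of two subshifts that are interchanged by a relabeling involution. Symmetry then forces $h(Y_0) = h(Y_1)$, and since $X = Y_0\cup Y_1$ we get $h(X)=h(Y_0)=h(Y_1)$; choosing any measure of maximal entropy $\mu_c$ for the subsystem $Y_c$ (these exist because subshifts are expansive) produces two MMEs of $X$ with $\operatorname{supp}\mu_0\subseteq Y_0$ and $\operatorname{supp}\mu_1\subseteq Y_1$ disjoint. Thus the entire difficulty is to arrange $Y_0$ and $Y_1$ so that (i) they share no bi-infinite point, yet (ii) the ``color'' $c$ of any \emph{finite} word of length $n$ can be reversed by altering only $1 + 2\lfloor \log_2\log_2 n\rfloor$ symbols, which is what will give LAS with the advertised $g$.

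To reconcile (i) and (ii) I would encode the color redundantly across a hierarchy of doubly-exponential scales. Fix lengths $\ell_k$ growing like $\ell_k \approx 2^{2^k}$, so that $\log_2\log_2 \ell_k \approx k$ and a word of length $n$ meets on the order of $\log_2\log_2 n$ scales. Points of $X$ are bi-infinite concatenations of level-$k$ blocks, where each level-$(k{+}1)$ block is a concatenation of level-$k$ blocks framed by a short \emph{marker} that records the color bit at scale $k$; between markers one is free to write arbitrary binary ``data,'' which supplies the entropy. Membership in $X$ requires that the color bits at all scales agree, so that a bi-infinite point has a single well-defined color and $Y_0\cap Y_1=\varnothing$; but only finitely many markers -- about $\log_2\log_2 n$ of them, two per scale -- occur inside a length-$n$ window, so overwriting exactly those markers flips the finite word's color. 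The count of edited symbols is then $2\lfloor\log_2\log_2 n\rfloor$ for the complete scales plus a single correction for the partial scale straddling the window's boundary, matching $g(n)=1+2\lfloor\log_2\log_2 n\rfloor$.

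With the construction in place, verifying LAS is a bookkeeping argument. Given words $w^{(1)},\dots,w^{(m)}\in\LL(X)$, let $c$ be the color of $w^{(m)}$; for each $i<m$ I would recolor $w^{(i)}$ to color $c$ by rewriting its markers (within the budget above) and then complete its trailing partial block, using the free data positions, so that its right end lands at a genuine block boundary of $Y_c$ and leads correctly into the (untouched) next word. Because all edits lie in $w^{(1)},\dots,w^{(m-1)}$ and each is charged only against its own length, the resulting concatenation $v^{(1)}\cdots v^{(m-1)}w^{(m)}$ is a legal word of $Y_c\subseteq X$, which is exactly the one-sided (left) form of almost specification. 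Finally one checks that the marker and padding overhead is $o(n)$, so that it does not affect the complexity growth; this secures both $h(Y_0)=h(Y_1)=h(X)$ and that the two measures above really are of maximal entropy.

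The main obstacle is the tension pulling the construction in opposite directions: disjointness of supports and equality of entropy want the color to be a rigid, globally determined invariant, whereas LAS wants the color to be cheaply reversible. The redundant hierarchical encoding is what resolves this, but making it work requires care on two fronts. First, the splicing into the last word must be achieved without editing $w^{(m)}$, so the construction has to guarantee that for each color $c$ one can always prepend a suitable completed color-$c$ block in front of any admissible starting phase of $w^{(m)}$; this is precisely the feature that distinguishes left almost specification from two-sided almost specification and is the delicate point. Second, the boundary partial blocks at both ends of each $w^{(i)}$ must be repairable strictly within the $1+2\lfloor\log_2\log_2 n\rfloor$ budget, which forces the marker alphabet and the scale growth to be chosen so that at most one partial scale ever needs the extra ``$+1$'' correction. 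I expect the entropy computation itself to be routine once the overhead is shown to be sublinear; the genuinely constraining part is calibrating the hierarchy so that the edit count is exactly $1+2\lfloor\log_2\log_2 n\rfloor$ rather than merely $O(\log\log n)$.
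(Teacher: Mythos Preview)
There is a genuine gap in the marker count. In a nested hierarchy where ``each level-$(k{+}1)$ block is a concatenation of level-$k$ blocks framed by a marker,'' a window of length $n$ contains roughly $n/\ell_j$ markers at \emph{each} scale $j$ with $\ell_j \le n$, and hence $\sum_{j} n/\ell_j$ markers in total; this is linear in $n$, not $O(\log\log n)$ (with $\ell_j \approx 2^{2^j}$ the sum is already about $n/4$). Since membership in $Y_c$ requires \emph{all} markers to record color $c$, recoloring a length-$n$ word forces linearly many edits, not $1 + 2\lfloor\log_2\log_2 n\rfloor$. Dropping the nesting does not help: if points are free concatenations of blocks of arbitrary levels, a point may consist entirely of level-$1$ blocks and again carries linearly many markers. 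I do not see how to arrange a genuine disjoint union $Y_0 \sqcup Y_1$ in which color is determined on every bi-infinite point yet can be flipped on every finite window with only $O(\log\log n)$ edits; the tension you flag at the end is fatal to this architecture rather than merely delicate. There is also a second obstruction you underestimate: with a positionally rigid hierarchy, $w^{(1)}$ and $w^{(2)}$ typically sit at incompatible phases, so matching them requires \emph{relocating} the markers in $w^{(1)}$, not just flipping their bits.

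The paper's construction sidesteps both issues by abandoning the disjoint-union picture. It builds a coded shift on the signed alphabet $\{-N,\ldots,-1,1,\ldots,N\}$ generated by $T=T^+\cup T^-$, where $T^{\pm}_n$ is chosen (via a $2$-spanning-set lemma) to be $(1+2\lfloor\log_2\log_2 n\rfloor)$-spanning among constant-sign words of length $n$, while simultaneously satisfying $\sum_n |T_n| N^{-n} < 1$. One checks that $X$ contains both full shifts $\{1,\ldots,N\}^\ZZ$ and $\{-N,\ldots,-1\}^\ZZ$; the summability bound then forces $h(X)=\log N$ by the usual coded-shift counting, so the two uniform Bernoulli measures are MMEs with disjoint supports. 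For one-sided almost specification, given $v,w\in\LL(X)$ one decomposes $w$ into maximal constant-sign pieces; every piece after the first is already in $T$ (by prefix-closure), and only the \emph{first} constant-sign piece of $w$ must be pushed into $T$, at cost $\le g(|w|)$. The edits are thus confined to a single code word rather than spread across a hierarchy, and no phase-matching is needed because $X$ freely concatenates code words of either sign.
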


This immediately leads to the following open problem.

\begin{question}\label{q:unbdd}
Is there any unbounded function $g(n)$ such that LAS with mistake function $g$ implies intrinsic ergodicity?
\end{question}

We omit details here, but using the example of Theorem \ref{doublelogex} as a starting point, one can use higher-power presentations just as in \cite{pavlovAS} to create examples satisfying LAS with $g(n) < C \log \log n$ and multiple MMEs for arbitrarily small $C$. So, the mistake functions $g$ for which we do not know whether LAS implies uniqueness of the MME are those which are unbounded, but for which $\frac{g(n)}{\log \log n} \rightarrow 0$ as $n \rightarrow \infty$.  See Remark \ref{rmk:unbdd} and \S\ref{sec:unbdd} for the point at which our proof breaks down when $g$ is unbounded.

The example 
from Theorem~\ref{doublelogex} 
has exactly two ergodic MMEs, whose supports are disjoint. It is actually possible for a subshift to have two ergodic MMEs that are \emph{both} fully supported -- this is the case, for example, with the Dyck shift. This leads to the following natural question.

\begin{question}\label{q:fully-supported}
Is there a subshift $X$ with LAS for some sublinear mistake function for which there are multiple fully supported ergodic MMEs?
\end{question}

The mechanism for uniqueness in Theorem \ref{thm:las-main} is a result from \cite{climenhaga1} that uses a certain non-uniform specification condition (superficially quite different from almost specification) to model $X$ with a countable state Markov shift. 
The conditions in \cite{climenhaga1} are mild variants of conditions introduced in \cite{CT}, where they were used to prove that not only are $\beta$-shifts intrinsically ergodic, but so are their subshift factors.  Although the conditions from \cite{CT} are well-behaved under passing to factors, their behaviour upon passing to products is not immediately clear.  On the other hand, as we will show, the hypotheses of Theorem \ref{thm:las-main} are stable under both products and factors. 

\begin{proposition}\label{prodfact}
If $X,Y$ are subshifts satisfying LAS with bounded $g$, then so is their product $X\times Y$.  Similarly, if $X$ is a subshift satisfying LAS with bounded $g$, and $Z$ is a symbolic factor of $X$, then $Z$ satisfies LAS with bounded $g$.
\end{proposition}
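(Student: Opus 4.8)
The plan is to prove the two closure properties (products and factors) separately, starting from the definition of LAS. Recall that LAS with mistake function $g$ says: for any words $w^{(1)},\dots,w^{(n)}$ in the language, there exist words $v^{(1)},\dots,v^{(n-1)}$ with $v^{(i)}$ obtained from $w^{(i)}$ by changing at most $g(|w^{(i)}|)$ symbols (and $v^{(n)} = w^{(n)}$ left untouched), so that the concatenation $v^{(1)}\cdots v^{(n-1)} w^{(n)}$ lies in the language. Both closure arguments will take words in the target subshift, lift or project them into words in $X$ (and $Y$), apply LAS there, and then push the resulting concatenation back.

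For the product $X \times Y$, I would fix words $(a^{(i)}, b^{(i)})$ in the language of $X \times Y$, where by definition $a^{(i)}$ is a word of $X$ and $b^{(i)}$ is a word of $Y$ of the same length. Applying LAS to the $a^{(i)}$ in $X$ and independently to the $b^{(i)}$ in $Y$ yields modified words whose concatenations lie in the respective languages. The one subtlety is that the number of changes is controlled by $g_X$ for the $X$-coordinate and $g_Y$ for the $Y$-coordinate, but the number of \emph{coordinate pairs} that change is at most the sum, so the product satisfies LAS with mistake function $g_X + g_Y$, which is still bounded. I would take $g_{X\times Y} = \max$ of the two suprema times $2$, or simply note boundedness of the sum; since the statement only claims bounded $g$, this is clean.

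The factor case is where I expect the main obstacle to lie. Let $\pi\colon X \to Z$ be a sliding block code (the form of any symbolic factor map, after recoding, by Curtis--Hedlund--Lyndon) with some anticipation/memory window, and fix words $u^{(1)},\dots,u^{(n)}$ in the language of $Z$. I would lift each $u^{(i)}$ to a word $w^{(i)}$ in the language of $X$ with $\pi(w^{(i)})$ equal to $u^{(i)}$ (up to trimming the boundary symbols lost to the coding window), apply LAS in $X$ to get a concatenation $v^{(1)}\cdots v^{(n-1)} w^{(n)}$ in the language of $X$, and then apply $\pi$ to return to $Z$. The difficulty is twofold: first, the sliding block window means that changing one symbol of $w^{(i)}$ can alter up to a bounded number (depending on the window size) of symbols of its image under $\pi$, so the mistake count is inflated by a multiplicative constant — still bounded, which is fine; second, and more delicately, the coding window causes a loss of symbols at the boundaries between words when projecting, so I must be careful that the projected word genuinely decomposes as a concatenation of suitably-modified copies of the $u^{(i)}$, with the final block $u^{(n)}$ untouched as LAS demands.

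To handle the boundary bookkeeping I would pad each lifted word with a fixed number of buffer symbols equal to the window radius, so that after projection each block of $\pi(v^{(i)})$ contains a faithful copy of a modified $u^{(i)}$ with only boundedly many edited symbols, and arrange the lift of $u^{(n)}$ so that its untouched image recovers $u^{(n)}$ exactly. The bounded window size guarantees that all the error terms — the coding-induced symbol changes and the boundary trimming — contribute only a bounded additive or multiplicative amount to the mistake function, so $Z$ satisfies LAS with a new bounded $g_Z$. The cleanest formulation treats the left-untouched condition carefully: I must ensure the \emph{last} word is exactly $u^{(n)}$, which forces me to choose the lift of $u^{(n)}$ and the preceding buffer so that the final projected block reads off as $u^{(n)}$ verbatim; this is the one place where the asymmetry of LAS (as opposed to two-sided almost specification) requires genuine attention rather than routine estimation.
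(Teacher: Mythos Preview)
Your proposal is correct and follows essentially the same strategy as the paper: for products, apply LAS coordinatewise and add the mistake bounds; for factors, lift, apply LAS upstairs, project, and absorb the window-induced blowup into the bounded mistake constant.

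Two minor differences are worth noting. First, the paper works throughout with the two-word formulation of LAS (Definition~\ref{LASdef}) rather than the $n$-word consequence you quote; since the two-word property is the definition and the $n$-word version follows from it by induction, checking only two words suffices and makes the bookkeeping cleaner. Second, for the factor case the paper handles the boundary issue you anticipate not by padding but by an asymmetric lift: given $w^{(1)},w^{(2)}\in\mathcal{L}(Z)$ and a radius-$r$ code $\phi$, lift $w^{(2)}$ to $v^{(2)}\in\mathcal{L}_{|w^{(2)}|+2r}(X)$ (so that $\phi$ applied to any word ending in $v^{(2)}$ has $w^{(2)}$ exactly as its suffix), lift $w^{(1)}$ to $v^{(1)}\in\mathcal{L}_{|w^{(1)}|+2r}(X)$ and then \emph{truncate} the final $2r$ letters to obtain $u^{(1)}\in\mathcal{L}_{|w^{(1)}|}(X)$. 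Applying LAS in $X$ to $u^{(1)},v^{(2)}$ and projecting yields a word of length exactly $|w^{(1)}|+|w^{(2)}|$ ending in $w^{(2)}$; the first $|w^{(1)}|$ letters differ from $w^{(1)}$ in at most $m(2r+1)$ places (the $m$ upstairs changes each affect at most $2r+1$ downstairs symbols) plus $2r$ ``junk'' letters at the interface, giving the explicit bound $g\equiv 2r+m(2r+1)$. Your padding idea would also work, but the truncate-one-side/extend-the-other trick avoids having to argue that the buffer symbols can be chosen legally.
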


\begin{remark}
Since $\beta$-shifts satisfy LAS with $g \equiv 1$, Proposition~\ref{prodfact} implies that any shift created by taking iterated products/factors of $\beta$-shifts also has a unique MME. For example, given any $\beta, \beta'$, we can define $X_{\beta, \beta'}$ to be the subshift consisting of all sequences created by coordinate-wise sums of points from the $\beta$-shifts $X_{\beta}$ and $X_{\beta'}$. It is clear that $X_{\beta, \beta'}$ is a factor of the direct product $X_{\beta} \times X_{\beta'}$ (induced by the letter-to-letter map $(i,j) \mapsto i + j$), and so every $X_{\beta, \beta'}$ also has a unique MME. It is quite possible that the methods of \cite{CT} or \cite{climenhaga1} could also be used to prove uniqueness of MME for these shifts, but it would be more difficult.
\end{remark}

In \S\ref{defs} we give all the relevant definitions.  Theorem \ref{thm:las-main} is proved in \S\ref{sec:main-pf}, and the remainder of the proofs are given in \S\ref{sec:other-pfs}.

\subsection*{Acknowledgments}
We are grateful to the anonymous referee for a careful reading and many helpful comments, including suggestions that led to the current proofs (and formulations) of Theorems \ref{measure-center} and \ref{measure-center-2}.

\section{Definitions and preliminaries}\label{defs}

\subsection{Symbolic dynamics}

\begin{definition}
Let $A$ be a finite set, which we call an \textbf{alphabet}.  The \textbf{full shift} over $A$ is the set $A^{\zz} = \{\cdots x(-1) \ x(0) \  x(1) \cdots \ : \ x(i) \in A\}$, which is a compact 
metric space with the metric $\rho$ defined by $\rho(x,y) = 2^{-\min \{ |k| : x(k) \neq y(k) \}}$ for $x \neq y$.  

A \textbf{word} over $A$ is an element of $A^* := \bigcup_{n=0}^\infty A^n$; here $A^0 = \{\varepsilon\}$, where $\varepsilon$ is the so-called empty word. Given a word $w\in A^n$, we write $|w|=n$ for the \textbf{length} of the word. (We will also use $|S|$ to refer to cardinalities of finite sets, however the usage should always be clear from context.) Given $x\in A^\ZZ$ and $i < j \in \ZZ$, we write $x([i,j]) = x(i) x(i+1) \cdots x(j)$ for the word of length $j-i+1$ that begins at position $i$ and ends at position $j$.
\end{definition}


\begin{definition}
The \textbf{shift action}, denoted by $\{\sigma^n\}_{n \in \zz}$, is the $\zz$-action on a full shift $A^{\zz}$ defined by $(\sigma^n x)(m) = x(m+n)$ for $m,n \in \zz$. 
A \textbf{subshift over $A$} is a closed subset of the full shift $A^{\zz}$ that is invariant under the shift action.  Every subshift is a compact metric space with respect to the induced metric from $A^\zz$.
\end{definition}

The shift $\sigma := \sigma^1$ is a homeomorphism on any subshift $X$, making $(X,\sigma)$ a topological dynamical system. A  subshift is characterized by its list of forbidden words $\mathcal{F} \subset A^*$: given any such $\mathcal{F}$, the set $X_{\mathcal{F}} := \{x \in A^{\zz} \ : \ x([i,j]) \notin \mathcal{F} \ \forall i,j \in \zz, i<j\}$ is closed and shift-invariant, and any subshift can be represented in this way.

\begin{definition}
The \textbf{language} of a subshift $X$, denoted by $\mathcal{L}(X)$, is the set of all words which appear in points of $X$. For any $n \in \NN$, we write $\mathcal{L}_n(X) := 
\mathcal{L}(X) \cap A^n$ for the set of words in the language of $X$ with length $n$.  
We will also need to deal with collections of words $\DDD \subset \LLL(X)$, and given such a collection we write $\DDD_n = \DDD \cap \LLL_n(X)$ for the set of words in $\DDD$ with length $n$.
\end{definition}


A collection of words $\DDD \subset A^*$ is said to be \textbf{factorial} if it is closed under passing to subwords.  The language $\LLL(X)$ of a subshift is factorial, but we will also have to deal with subsets $\DDD \subset \LLL(X)$ that are not factorial. Observe that if $\DDD$ is factorial, then
\begin{equation}\label{eqn:submult}
|\DDD_{m+n}| \leq |\DDD_m| \cdot |\DDD_n| \text{ for all }m,n\in \NN.
\end{equation}

\begin{definition}
For any subshift $X$ and word $w \in \mathcal{L}_n(X)$, the \textbf{cylinder set} $[w]$ is the set of all $x \in X$ with $x([1,n]) = w$.  
The shift $X$ is \textbf{irreducible} (or \textbf{topologically transitive}) if for any $u,v \in \mathcal{L}(X)$, there exists $n \in \mathbb{N}$ so that 
$[u] \cap \sigma^{-n} [v] \neq \varnothing$, or equivalently if there exists a word $w$ so that $uwv \in \mathcal{L}(X)$.
\end{definition}

\subsection{Thermodynamic formalism for subshifts}

We recall some of the main elements of thermodynamic formalism as they appear in the context of subshifts; further details and results can be found in \cite{walters}.

\begin{definition}\label{topent}
The \textbf{topological entropy} of a subshift $X$ is
\begin{equation}\label{eqn:hX}
h(X) := \lim_{n \rightarrow \infty} \frac{1}{n} \ln |\mathcal{L}_n(X)| 
= \inf_{n\in \NN} \frac 1n \ln|\mathcal{L}_n(X)|,
\end{equation}
where existence of the limit and the second equality follow from \eqref{eqn:submult} and a standard lemma regarding subadditive sequences.
We will also need to consider the entropy of a subset of the language: given $\mathcal{D} \subset \mathcal{L}(X)$, we write
\begin{equation}\label{eqn:hD}
h(\mathcal{D}) := \limsup_{n\to \infty} \frac 1n \ln |\mathcal{D}_n|,
\end{equation}\textsl{}
where in general the limit may not exist.
\end{definition}

We will make occasional use of the following elementary fact:
\begin{equation}\label{eqn:h-union}
h(\mathcal{C} \cup \mathcal{D}) = \max(h(\mathcal{C}),h(\mathcal{D})).
\end{equation}


If $\DDD \subset A^*$ is factorial, then just as with $\LLL(X)$, \eqref{eqn:submult} shows that the limit in \eqref{eqn:hD} exists and is equal to $\inf_n \frac 1n \ln |\DDD_n|$.  The following consequence of this is useful enough to be worth stating formally.

\begin{lemma}\label{wordcount}
For any factorial $\DDD \subset A^*$, we have
$
|\DDD_n| \geq e^{nh(\DDD)}
$
for every $n\in \NN$.
In particular, for any subshift $X$, we have
$
|\mathcal{L}_n(X)| \geq e^{nh(X)}.
$
\end{lemma}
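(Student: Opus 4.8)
The plan is to read the inequality directly off the fact, recalled in the paragraph immediately preceding the statement, that for factorial $\DDD$ the entropy $h(\DDD)$ is not merely a $\limsup$ but an honest infimum. First I would set $a_n := \ln|\DDD_n|$ and observe that factoriality, together with \eqref{eqn:submult}, makes the sequence $(a_n)$ subadditive, i.e. $a_{m+n} \le a_m + a_n$ for all $m,n \in \NN$. By the standard subadditive lemma the limit $\lim_n a_n/n$ exists and equals $\inf_n a_n/n$; since every subword of a word in $\DDD_n$ again lies in $\DDD$, this is precisely the content already noted in the excerpt, so that $h(\DDD) = \inf_{n} \frac1n \ln|\DDD_n|$.

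The key step is then the trivial but decisive observation that an infimum bounds each of its terms from below: for every fixed $n \in \NN$ we have $h(\DDD) \le \frac1n \ln|\DDD_n|$. Multiplying through by $n$ and exponentiating yields $|\DDD_n| \ge e^{n h(\DDD)}$, which is exactly the claimed bound. For the ``in particular'' clause I would apply the general statement to $\DDD = \LLL(X)$, which is factorial as noted in Section~\ref{defs}, using that $h(\LLL(X)) = h(X)$ by the definition of topological entropy in \eqref{eqn:hX} (the limit there exists, so the $\limsup$ in \eqref{eqn:hD} agrees with it).

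I expect no genuine obstacle here; the only point worth flagging is that the inequality runs in the direction of the infimum, so it is essential that $h(\DDD)$ be the \emph{infimum} of $\frac1n\ln|\DDD_n|$ rather than a limit extracted along some subsequence. This is exactly what factoriality buys us through subadditivity, and it is the reason the hypothesis that $\DDD$ be factorial cannot be dropped: for a general $\DDD$ the $\limsup$ defining $h(\DDD)$ could exceed $\frac1n\ln|\DDD_n|$ for infinitely many $n$, and the bound would fail.
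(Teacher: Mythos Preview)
Your proposal is correct and is essentially identical to the paper's approach: the paper does not write out a separate proof, but states the lemma as an immediate consequence of the fact (noted just before the statement) that for factorial $\DDD$ subadditivity gives $h(\DDD)=\inf_n \frac1n\ln|\DDD_n|$, from which the bound follows exactly as you describe.
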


Given a factorial set of words $\DDD\subset A^*$, one can define a subshift $X(\DDD)$ by the condition that $x\in X(\DDD)$ if and only if every subword of $x$ is in $\DDD$. The language of $X(\DDD)$ is contained in $\DDD$, so clearly 
$h(X(\DDD)) \leq h(\DDD)$. However, we may have $\LLL(X(\DDD)) \subsetneq \DDD$, since there could be words in $\DDD$ that do not appear as subwords of arbitrarily long words of $\DDD$. Accordingly, we will need the following result, which is proved in \S\ref{sec:entropy-pf} and applies even when $\LLL(X(\DDD))$ is smaller than $\DDD$.


\begin{lemma}\label{entbigger}
Let $\DDD$ be a factorial set of words.  Then 
 $h(X(\mathcal{D})) = h(\mathcal{D})$.
\end{lemma}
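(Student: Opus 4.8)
Since $\LLL(X(\DDD)) \subseteq \DDD$ already gives $h(X(\DDD)) \le h(\DDD)$ for free, the entire content is the reverse inequality $h(X(\DDD)) \ge h(\DDD)$. Write $h = h(\DDD)$, and recall from Lemma \ref{wordcount} that $|\DDD_N| \ge e^{Nh}$ for every $N$, while factoriality gives $\frac1N \ln|\DDD_N| \to h$. The naive combinatorial attempt is a ``middle block'' argument: given $u \in \DDD_{N}$ with $N = n+2d$, its central window of length $n$ lies in $\DDD$ and extends by $d$ symbols on each side, and splitting $u$ as (left $d$)(middle $n$)(right $d$) yields $|\DDD_{n+2d}| \le |\DDD_d|^2 \cdot |\{\text{admissible windows}\}|$, so the number of length-$n$ words extendable by depth $d$ on both sides is at least $e^{(n+2d)h}/|\DDD_d|^2 = e^{nh - 2\delta_d}$, where $\delta_d := \ln|\DDD_d| - dh \ge 0$. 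This is fine for any \emph{fixed} depth $d$, but membership in $\LLL(X(\DDD))$ requires extendability to \emph{infinity}, i.e. depth $d \to \infty$; since $\delta_d$ is only $o(d)$ and need not remain bounded, the bound $e^{nh-2\delta_d}$ degrades and does not survive $d \to \infty$. This is precisely the main obstacle, and it reflects the fact that no single finite word can witness infinite two-sided extendability.

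The fix is to pass to shift-invariant measures, which encode infinite extendability automatically via weak-$*$ compactness. For each $N$, let $\mu_N$ be the measure obtained by choosing $u \in \DDD_N$ uniformly and averaging the point masses $\delta_{\sigma^j u^\infty}$ over $0 \le j < N$, where $u^\infty \in A^\ZZ$ is the $N$-periodic point determined by repeating $u$; each $\mu_N$ is shift-invariant because $u^\infty$ is $N$-periodic. Let $\mu$ be a weak-$*$ limit point of $\{\mu_N\}$, which is again shift-invariant. I claim $\mathrm{supp}\,\mu \subseteq X(\DDD)$. Indeed, fix a word $v$ with $\mu[v]>0$; since cylinders are clopen, $\mu_N[v] \to \mu[v] > 0$ along the relevant subsequence. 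If instead $v \notin \DDD$, then by factoriality $v$ is not a subword of any $u \in \DDD_N$, so every occurrence of $v$ in $u^\infty$ straddles the periodic seam; there are at most $|v|-1$ such positions per period, whence $\mu_N[v] \le (|v|-1)/N \to 0$, forcing $\mu[v]=0$, a contradiction. Hence $\mu[v] > 0$ implies $v \in \DDD$, so every subword of a $\mu$-typical point lies in $\DDD$ and $\mu$ is supported on $X(\DDD)$.

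Finally, because each $\mu_N$ is built from the uniform measure on the $|\DDD_N| \ge e^{Nh}$ distinct length-$N$ blocks, the standard entropy estimate from the proof of the lower bound in the variational principle (see \cite{walters}) gives $h_\mu(\sigma) \ge \limsup_N \frac1N \ln|\DDD_N| = h$, the periodic seams affecting only a vanishing fraction of windows and therefore not lowering this growth rate. Combining this with the variational principle yields $h(X(\DDD)) \ge h_\mu(\sigma) \ge h = h(\DDD)$, completing the proof. The one genuinely delicate point to verify carefully is this entropy lower bound for $\mu$ -- equivalently, that the weak-$*$ limit does not lose exponential growth rate across the periodic seams -- which is handled exactly as in the Misiurewicz-style proof of the variational principle.
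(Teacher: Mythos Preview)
Your proof is correct, and it takes a genuinely different route from the paper's. The paper gives a purely combinatorial argument: it defines $\DDD_n^{(j)}$ as the set of words in $\DDD_n$ that extend by $j$ symbols on each side within $\DDD$, observes that the nested chain $\DDD_n^{(0)} \supset \DDD_n^{(n)} \supset \DDD_n^{(2n)} \supset \cdots$ stabilizes to a subset of $\LLL_n(X(\DDD))$, and then uses the factoriality bounds $e^{3knh} \leq |\DDD_{3kn}| \leq |\DDD_{kn}|^2 |\DDD_{kn}^{(kn)}|$ together with $|\DDD_{kn}^{(kn)}| \leq |\DDD_n^{(kn)}|^k$ to force $|\DDD_n^{(kn)}| \geq e^{n(h-2\epsilon)}$ for large $k$. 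You instead pass through invariant measures: build periodic-orbit approximants, argue that any weak-$*$ limit is supported on $X(\DDD)$ because words outside $\DDD$ can only occur across seams, and invoke the Misiurewicz entropy bound. Your approach is more ``soft'' and immediately produces an MME on $X(\DDD)$; the paper's is more elementary and yields explicit word-counting bounds $|\LLL_n(X(\DDD))| \geq e^{n(h(\DDD)-2\epsilon)}$ without appeal to the variational principle.

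Two small remarks. First, the entropy lower bound you flag as ``delicate'' is exactly Lemma~\ref{lem:build-mme} applied with the ambient subshift taken to be the full shift $A^\ZZ$ and $x_v = v^\infty$; the seams play no role whatsoever in that entropy estimate (the Misiurewicz argument only uses $H_{\nu_N}(\xi^{(N)}) = \ln|\DDD_N|$, which follows from injectivity of $u \mapsto u^\infty$), so you could simply cite that lemma and drop the caveat. Second, your support argument is clean and is the only place the seam count $(|v|-1)/N$ actually matters.
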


Now we recall some  definitions from measure-theoretic dynamics.  We write $\mathcal{M}(A^\ZZ)$ for the space of $\sigma$-invariant Borel probability measures on the full shift, and similarly $\mathcal{M}(X)$ will denote the elements of $\mathcal{M}(A^\ZZ)$ that give full weight to $X$.

\begin{definition}\label{measent}
The \textbf{measure-theoretic entropy} of $\mu\in \mathcal{M}(A^{\zz})$ is
\[
h(\mu) := \lim_{n \rightarrow \infty} \frac{-1}{n}  \sum_{w \in A^n} \mu([w]) \ln \mu([w]),
\]
where terms with $\mu([w]) = 0$ are omitted from the sum.  (Existence of the limit is a standard result and again uses subadditivity.)
\end{definition}

The \textbf{variational principle} \cite[Theorem 8.6]{walters} relates the two kinds of entropy: for every subshift $X$, we have $h(X) = \sup \{h(\mu) : \mu \in \MMM(X)\}$.

\begin{definition}
A \textbf{measure of maximal entropy} (MME) for a subshift $X$ is an invariant measure $\mu\in \mathcal{M}(X)$ for which $h(\mu) = h(X)$.  The shift $X$ is said to be \textbf{intrinsically ergodic} if it has a unique MME.
\end{definition}




We will need the following standard result on construction of measures with large (or full) entropy, which follows from the second half of the proof of \cite[Theorem 8.6]{walters}.

\begin{lemma}\label{lem:build-mme}
Let $X$ be a subshift and $n_k\to\infty$.  Given $\DDD_{n_k} \subset \LLL_{n_k}(X)$ and any choice of $x_v\in [v]$ for each $v\in \DDD_{n_k}$, consider the measures $\nu_k = \frac 1{|\DDD_{n_k}|} \sum_{v\in \DDD_{n_k}} \delta_{x_v}$ and $\mu_k = \frac 1{n_k} \sum_{i=0}^{n_k-1} \sigma_*^i \nu_k$.  Then any weak* limit point $\mu$ of the sequence $\mu_k$ is a $\sigma$-invariant measure with $h(\mu) \geq \liminf \frac 1{n_k} \ln |\DDD_{n_k}|$.  In particular, if $\liminf \frac 1{n_k} \ln |D_{n_k}| = h(X)$, then any weak* limit point of the sequence $\mu_k$ is an MME for $X$.
\end{lemma}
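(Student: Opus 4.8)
The plan is to treat the two assertions separately: the $\sigma$-invariance of any weak* limit $\mu$, and the entropy lower bound $h(\mu)\ge\liminf\frac1{n_k}\ln|\DDD_{n_k}|$. The ``in particular'' clause is then immediate, since the variational principle gives $h(\mu)\le h(X)$ for any $\mu\in\MMM(X)$, so the lower bound forces equality. Note first that every $x_v$ lies in $X$, so $\nu_k$, $\mu_k$, and hence $\mu$, are all supported on the closed invariant set $X$; thus $\mu\in\MMM(X)$.

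For invariance I would use the standard Krylov--Bogolyubov telescoping: since $\mu_k=\frac1{n_k}\sum_{i=0}^{n_k-1}\sigma_*^i\nu_k$, we have $\sigma_*\mu_k-\mu_k=\frac1{n_k}(\sigma_*^{n_k}\nu_k-\nu_k)$, whose total variation is at most $2/n_k\to0$. As $\sigma_*$ is weak* continuous, passing to a subsequence with $\mu_{k_j}\to\mu$ yields $\sigma_*\mu=\mu$.

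The substantive part is the entropy bound. Writing $H_m(\beta)=-\sum_{B\in\beta}m(B)\ln m(B)$ and letting $\xi_n$ be the partition of $X$ into length-$n$ cylinders, the key starting observation is that the points $x_v$, $v\in\DDD_{n_k}$, lie in distinct atoms of $\xi_{n_k}$ each carrying mass $1/|\DDD_{n_k}|$ under $\nu_k$, so that
\[
H_{\nu_k}(\xi_{n_k})=\ln|\DDD_{n_k}|.
\]
To convert this into information about $\mu_k$ I would run the Misiurewicz counting argument from the variational principle. Fix a block length $q$, let $\beta_q$ be the length-$q$ cylinder partition, and for each offset $j\in\{0,\dots,q-1\}$ cut $\{0,\dots,n_k-1\}$ into consecutive length-$q$ runs starting at $j$ plus at most $2q$ leftover coordinates. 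Subadditivity of $H_m(\cdot)$ over joins, summed over $j$, together with the bound $H\le\ln(\#\text{atoms})$ controlling the leftovers by $2q^2\ln|A|$, gives
\[
q\ln|\DDD_{n_k}|\le\sum_{m=0}^{n_k-1}H_{\sigma_*^m\nu_k}(\beta_q)+2q^2\ln|A|.
\]
The crucial step is concavity of $m\mapsto H_m(\beta_q)$, which by Jensen yields $\frac1{n_k}\sum_m H_{\sigma_*^m\nu_k}(\beta_q)\le H_{\mu_k}(\beta_q)$, so dividing by $qn_k$ gives $\frac1{n_k}\ln|\DDD_{n_k}|\le\frac1q H_{\mu_k}(\beta_q)+\frac{2q\ln|A|}{n_k}$. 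Because cylinders in a subshift are clopen, $m\mapsto H_m(\beta_q)$ is weak* continuous, so along the subsequence defining $\mu$ I may let $k\to\infty$ to obtain $\liminf_k\frac1{n_k}\ln|\DDD_{n_k}|\le\frac1q H_\mu(\beta_q)$; letting $q\to\infty$ and recognizing $\frac1q H_\mu(\beta_q)\to h(\mu)$ from Definition~\ref{measent} completes the bound.

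I expect the main obstacle to be the bookkeeping in the combinatorial decomposition --- verifying the leftover count is below $2q$ and that the block-start indices $j+rq$ are distinct and exhaust a subset of $\{0,\dots,n_k-1\}$ --- together with correctly invoking concavity and weak* continuity of the partition-entropy functional. None of these is deep, but they are the real content and must be assembled in the order above.
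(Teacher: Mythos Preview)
Your argument is correct and is precisely the approach the paper intends: the paper does not give its own proof but simply states that the lemma ``follows from the second half of the proof of \cite[Theorem 8.6]{walters},'' which is exactly the Misiurewicz block-decomposition and concavity argument you have written out. The only cosmetic point is that when passing to the limit you are working along the subsequence $k_j$ defining $\mu$, so the left side becomes $\liminf_j \frac{1}{n_{k_j}}\ln|\DDD_{n_{k_j}}|$, which dominates the full $\liminf_k$; you already use this implicitly and it causes no trouble.
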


Our main goal is to prove that certain subshifts are intrinsically ergodic. A key intermediate step in our approach will be to establish the property of \textbf{entropy minimality}, first defined in \cite{CS}. 

\begin{definition}\label{entmindef}
A subshift $X$ is \textbf{entropy minimal} if every nonempty proper subshift $X' \subsetneq X$ has topological entropy less than $h(X)$.
\end{definition}

Two equivalent formulations are easily checked and will be useful later. Firstly, $X$ is clearly entropy minimal iff for all $w \in \mathcal{L}(X)$, the subshift
\[
X_w := \{x \in X \ : \ x \textrm{ does not contain $w$ as a subword}\}
\]
has topological entropy less than $h(X)$. Also, $X$ is entropy minimal iff all measures of maximal entropy of $X$ are \textbf{fully supported} (meaning that every open set in $X$ has positive measure). In particular, entropy minimality holds whenever $X$ is intrinsically ergodic and the unique MME is fully supported. Such shifts include all mixing SFTs and, more generally, the class of shifts that we will study.

\subsection{Specification properties}

By upper semi-continuity of the entropy function $h\colon \mathcal{M}(X) \to [0,\ln |A|]$ (or by applying Lemma \ref{lem:build-mme} with $\DDD = \LLL(X)$), every subshift $X$ has at least one MME.  We will be interested in the question of uniqueness, taking the following definition as our starting point.

\begin{definition}
A subshift $X$ has the \textbf{specification} property if  there is $\tau\in \NN$ such that for every $v,w\in \LLL(X)$ there is $u\in \LLL(X)$ such that $|u|= \tau$ and $vuw\in \LLL(X)$.
\end{definition}

It was shown by Bowen \cite{bowen} that specification implies uniqueness of the MME.\footnote{Bowen's original definition is made in a more general setting and has a few more hypotheses, including periodicity of the ``glued'' word; in the setting of subshifts it turns out that our definition is equivalent to his \cite{bertrand}.}  More recently, various weakenings of the specification property have been introduced.  The version we will focus on was introduced by Pfister and Sullivan \cite{pfister-sullivan} as the ``$g$-almost product property''; we will follow the convention of \cite{thompson} and call this property \textbf{almost specification}. 

In this definition and the remainder of the paper, the distance between two words $v,w$ (which is only defined when $|v| = |w|$) is always assumed to be the \textbf{Hamming distance} $d_H(v,w) := |\{i \ : \ v(i) \neq w(i)\}|$.  We will frequently use the associated \textbf{Hamming balls} (of radius $m$)
\begin{equation}\label{eqn:ham-ball}
B_m(w) := \{v\in \LLL_{|w|}(X) \ : \ d_H(v,w) \leq m\}.
\end{equation}

\begin{definition}\label{ASdef}
A subshift $X$ has \textbf{almost specification (or AS) with mistake function $g(n)$} if
\begin{itemize}
\item
$\frac{g(n)}{n} \rightarrow 0$
\item For any words $w^{(1)}$, $w^{(2)}$, $\ldots$, $w^{(k)} \in \mathcal{L}(X)$, there exist words $\bar{w}^{(1)}$, $\bar{w}^{(2)}$, $\ldots$, $\bar{w}^{(k)} \in \mathcal{L}(X)$ so that $d_H(\bar{w}^{(i)}, w^{(i)}) \leq g(|w^{(i)}|)$ for
$1 \leq i \leq k$, and the concatenation $\bar{w}^{(1)} \bar{w}^{(2)} \ldots \bar{w}^{(k)}$ is in $\mathcal{L}(X)$.
\end{itemize}
\end{definition}

We will also consider the following slightly stronger property.

\begin{definition}\label{LASdef}
A subshift $X$ has \textbf{left almost specification (or LAS)  with mistake function $g(n)$} if
\begin{itemize}
\item $\frac{g(n)}{n} \rightarrow 0$
\item For any words $w^{(1)}, w^{(2)} \in \mathcal{L}(X)$, there exists a word $\bar{w}^{(1)}$ in $\mathcal{L}(X)$ where $d_H(\bar{w}^{(1)}, w^{(1)}) \leq g(|w^{(1)}|)$ and $\bar{w}^{(1)} w^{(2)} \in \mathcal{L}(X)$.
\end{itemize}
\end{definition}

We first quickly demonstrate that specification does in fact imply LAS with bounded $g$.

\begin{lemma}\label{LASspec}
If $X$ has specification, then it has LAS with bounded $g$.
\end{lemma}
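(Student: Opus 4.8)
The plan is to let $\tau$ be the specification constant and to show that the constant function $g \equiv \tau$ serves as a bounded mistake function for LAS. Since such a $g$ is bounded, the sublinearity requirement $g(n)/n \to 0$ holds automatically, so all of the content lies in the second bullet of Definition~\ref{LASdef}: given arbitrary $w^{(1)}, w^{(2)} \in \LLL(X)$, I must produce $\bar{w}^{(1)} \in \LLL(X)$ with $|\bar{w}^{(1)}| = |w^{(1)}|$, with $d_H(\bar{w}^{(1)}, w^{(1)}) \leq \tau$, and with $\bar{w}^{(1)} w^{(2)} \in \LLL(X)$. Writing $n := |w^{(1)}|$, I would split into the cases $n > \tau$ and $n \leq \tau$.

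In the main case $n > \tau$, the key obstacle is that LAS, unlike ordinary specification, does not allow inserting a gluing block between the two words: the concatenation must be exactly $\bar{w}^{(1)} w^{(2)}$ with no interposed letters, and $\bar{w}^{(1)}$ must have the same length as $w^{(1)}$. I would resolve this by absorbing the gluing word into the tail of $w^{(1)}$. Concretely, write $w^{(1)} = ab$ where $a = w^{(1)}([1, n-\tau])$ and $b$ is the final $\tau$ letters, apply the specification property to $a$ and $w^{(2)}$ to obtain $u \in \LLL(X)$ with $|u| = \tau$ and $auw^{(2)} \in \LLL(X)$, and set $\bar{w}^{(1)} := au$. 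Then $|\bar{w}^{(1)}| = (n-\tau) + \tau = n$ and $\bar{w}^{(1)} w^{(2)} = auw^{(2)} \in \LLL(X)$, while $\bar{w}^{(1)}$ agrees with $w^{(1)}$ on the first $n-\tau$ coordinates, so that $d_H(\bar{w}^{(1)}, w^{(1)}) \leq \tau$, as required.

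The remaining case $n \leq \tau$ is the only genuine wrinkle, since there is no $\tau$-length tail available to overwrite. Here I would bypass specification altogether and use only that $w^{(2)} \in \LLL(X)$: choosing any point $x \in X$ in which $w^{(2)}$ occurs, the block of length $n$ immediately preceding that occurrence is a word $\bar{w}^{(1)} \in \LLL_n(X)$ with $\bar{w}^{(1)} w^{(2)} \in \LLL(X)$, and trivially $d_H(\bar{w}^{(1)}, w^{(1)}) \leq n \leq \tau$ since both words have length $n$. (Taking $n \leq \tau$ rather than $n < \tau$ here conveniently sidesteps any issue with the empty word $a = \varepsilon$ at the boundary $n = \tau$.) Combining the two cases gives $d_H(\bar{w}^{(1)}, w^{(1)}) \leq \tau$ in all situations, so $g \equiv \tau$ witnesses LAS; the statement is vacuous if $X = \varnothing$, so nonemptiness of $X$ may be assumed throughout.
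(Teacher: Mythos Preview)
Your proof is correct and follows essentially the same approach as the paper's own proof: both take $g \equiv \tau$, handle the case $|w^{(1)}| > \tau$ by truncating the last $\tau$ letters of $w^{(1)}$ and replacing them with the gluing word supplied by specification, and handle the case $|w^{(1)}| \leq \tau$ by simply taking any length-$|w^{(1)}|$ word that legally precedes $w^{(2)}$. The only cosmetic difference is your explicit remark about the boundary case $n=\tau$, which the paper leaves implicit.
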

\begin{proof}
Suppose that $X$ has specification; then there exists $\tau$ so that for all $v,w \in \mathcal{L}(X)$, there exists $u \in \mathcal{L}(X)$ with $|u| = \tau$ such that $vuw \in \mathcal{L}(X)$. We claim that $X$ has LAS with $g \equiv \tau$. To see this, choose any $w^{(1)}, w^{(2)} \in \mathcal{L}(X)$.
If $|w^{(1)}| \leq \tau$, then choose any word $\bar{w}^{(1)}$ with length $|w^{(1)}|$ which can precede $w^{(2)}$ in a point of $X$; then trivially $d_H(w^{(1)}, \bar{w}^{(1)}) \leq \tau$ and $\bar{w}^{(1)} w^{(2)} \in \mathcal{L}(X)$. If $|w^{(1)}| > \tau$, then define $v$ to be the word obtained by removing the final $\tau$ letters of $w^{(1)}$. Then by specification, there exists $u$ with $|u| = \tau$ so that $vu w^{(2)} \in \mathcal{L}(X)$. Again $d_H(vu, w^{(1)}) \leq \tau$. In both cases, we found $\bar{w}^{(1)}$ with $d_H(w^{(1)}, \bar{w}^{(1)}) \leq \tau$ and $\bar{w}^{(1)} w^{(2)} \in \mathcal{L}(X)$, so $X$ has LAS with $g \equiv \tau$.
\end{proof}

The reader may check that if $X$ has LAS with $g(n)$, it also has LAS with $g'(n)$ defined by 
$g'(n) = \min\{g(k): k\geq n\}$, and so we always assume without loss of generality that $g$ is nondecreasing.
There are many subshifts known to satisfy AS and/or LAS; for instance, any $\beta$-shift has LAS\ with gap function $g \equiv 1$ (see \cite{pfister-sullivan2}); since $\beta$-shifts only have specification for a Lebesgue-null set of values of $\beta$ (see \cite{buzzi,schmeling}), this also demonstrates that the converse of Lemma \ref{LASspec} fails, and so LAS with bounded $g$ is a more general property than specification. Many of the so-called $S$-gap shifts satisfy AS\ (with gap function dependent on $S$).  Obviously there is a corresponding notion of right almost specification, and all of our proofs carry over to that case in a standard way. (See the proof of Corollary~\ref{thm:as-main} for more details.) So, though our results are stated for subshifts with LAS, they really apply to subshifts satisfying either type of ``one-sided'' almost specification.

We observe that LAS implies AS.

\begin{lemma}\label{LAS=>AS}
If a subshift has LAS\ with mistake function $g(n)$, then it has AS\ with mistake function $g(n)$.
\end{lemma}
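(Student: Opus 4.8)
The plan is to show that LAS with mistake function $g$ gives AS with the same mistake function by iterating the two-word gluing operation built into LAS. The statement of LAS concerns gluing only two words at a time, while AS requires gluing an arbitrary finite list $w^{(1)}, w^{(2)}, \ldots, w^{(k)}$. The first observation is that the sublinearity condition $\frac{g(n)}{n}\to 0$ is identical in both definitions, so that half of the AS definition is immediate and nothing needs to be done for it. The real content is the concatenation clause.

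First I would set up an induction (or equivalently, a backward pass) on the number of words $k$. The base case $k=1$ is trivial, since we may take $\bar w^{(1)}=w^{(1)}$. For the inductive step, the key idea is to glue from the right, leaving the rightmost word untouched at each stage. Concretely, given $w^{(1)},\dots,w^{(k)}$, I would first apply the inductive hypothesis to $w^{(2)},\dots,w^{(k)}$ to obtain modified words $\bar w^{(2)},\dots,\bar w^{(k)}$ with $d_H(\bar w^{(i)},w^{(i)})\le g(|w^{(i)}|)$ for $2\le i\le k$ and with the concatenation $\bar w^{(2)}\cdots \bar w^{(k)}\in\LLL(X)$. Then, treating $u:=\bar w^{(2)}\cdots\bar w^{(k)}$ as a single word in $\LLL(X)$, I would apply LAS to the pair $(w^{(1)},u)$ to obtain $\bar w^{(1)}$ with $d_H(\bar w^{(1)},w^{(1)})\le g(|w^{(1)}|)$ and $\bar w^{(1)} u\in\LLL(X)$. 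Since $\bar w^{(1)} u = \bar w^{(1)}\bar w^{(2)}\cdots\bar w^{(k)}$, this is exactly the desired concatenation, and all the Hamming-distance bounds hold.

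The point I would be most careful about is that applying LAS in the inductive step modifies only the left word $w^{(1)}$ and leaves the already-glued block $u=\bar w^{(2)}\cdots\bar w^{(k)}$ completely untouched; this is precisely the feature of LAS (as opposed to AS) that makes the induction close cleanly, because the changes made to $\bar w^{(2)},\dots,\bar w^{(k)}$ in earlier stages are never disturbed. In particular, each $\bar w^{(i)}$ is fixed once and for all at the stage where it is produced, so the final bounds $d_H(\bar w^{(i)},w^{(i)})\le g(|w^{(i)}|)$ survive. I expect no genuine obstacle here: the only thing to verify is that the bookkeeping on which word gets modified is consistent, and the left-sidedness of LAS is exactly what guarantees this. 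The whole argument is a short induction, and the sublinearity condition transfers verbatim.
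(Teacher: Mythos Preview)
Your proof is correct and is essentially the same right-to-left induction as the paper's: both arguments glue one word at a time from the left using LAS, leaving the already-assembled block on the right untouched. The only cosmetic difference is that the paper organizes the induction to prove the marginally stronger statement that the final word $w^{(k)}$ never needs to be modified, applying LAS first and then the inductive hypothesis, whereas you apply the inductive hypothesis first and then LAS.
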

\begin{proof}
We claim that LAS\ with mistake function $g(n)$ implies the following statement, which implies AS\ with mistake function $g(n)$: for any words $w^{(1)}$, $w^{(2)}$, $\ldots$, $w^{(k)} \in \mathcal{L}(X)$, there exist words $\bar{w}^{(1)}$, $\bar{w}^{(2)}$, $\ldots$, $\bar{w}^{(k-1)} \in \mathcal{L}(X)$ so that $d_H(\bar{w}^{(i)}, w^{(i)}) \leq g(|w^{(i)}|)$ for $1 \leq i \leq k-1$, and  $\bar{w}^{(1)} \bar{w}^{(2)} \ldots \bar{w}^{(k-1)} w^{(k)}$ is in $\mathcal{L}(X)$. (Here, note that $\bar{w}^{(k)}$ was not defined; $w^{(k)}$ does not need to be changed in the concatenation.)

This is proved via induction on $k$. For $k = 2$, this is exactly the definition of LAS, so the base case is proved. If the statement above is true for $k$, then given any words 
$w^{(1)}$, $w^{(2)}$, $\ldots$, $w^{(k+1)} \in \mathcal{L}(X)$, first use LAS to define $\bar{w}^{(k)}$ with $d_H(\bar{w}^{(k)}, w^{(k)}) \leq g(|w^{(k)}|)$ so that $\bar{w}^{(k)} w^{(k+1)} \in \mathcal{L}(X)$, and then apply the inductive hypothesis to the $k$ words $w^{(1)}, w^{(2)}, \ldots, w^{(k-1)}, \bar{w}^{(k)} w^{(k+1)}$ to yield the desired $\bar{w}^{(i)}$ for $1 \leq i \leq k-1$. 
\end{proof}


Almost specification has been used in the literature to study statistical behavior such as large deviations \cite{pfister-sullivan2} and multifractal properties \cite{pfister-sullivan,thompson}.  In \cite{CT}, the first author and D.J.\ Thompson introduced a new non-uniform version of the specification property that guarantees intrinsic ergodicity, and asked whether or not almost specification could be used to prove intrinsic ergodicity.  It was shown by the second author in \cite{pavlovAS} that almost specification does not imply intrinsic ergodicity, even for $g \equiv 4$.
To prove Theorem \ref{thm:las-main}, we will show that \emph{left} almost specification with any \emph{constant} mistake function implies a version of the non-uniform specification property from \cite{climenhaga1, CT}.  This property requires the existence of $\CCC^p, \GGG, \CCC^s \subset \LLL(X)$ such that
\begin{enumerate}[label=\textup{\textbf{[\Roman{*}]}}]
\item\label{spec}
there exists $\tau\in \NN$ such that for every $v,w\in \GGG$, there exists $u\in \LLL(X)$ such that $|u|=\tau$ and $vuw\in \GGG$ (we say that ``$\GGG$ has specification''), and moreover, this $u$ has the property that $v'uw'\in \GGG$ whenever $v'\in \GGG$ is a final segment of $v$ and $w'\in \GGG$ is an initial segment of $w$.
\item\label{gap}
$h(\CCC^p \cup \CCC^s \cup (\LLL \setminus \CCC^p \GGG \CCC^s)) < h(X)$.
\end{enumerate}

In light of \ref{spec}, the collections $\CCC^p$, $\CCC^s$, and $\LLL \setminus \CCC^p \GGG \CCC^s$ are thought of as \textbf{obstructions to specification}, since words in $\CCC^p\GGG\CCC^s$ can be glued together (as in the specification property) provided we are first allowed to remove an element of $\CCC^p$ from the front of the word, and an element of $\CCC^s$ from the end. Thus
\ref{spec}--\ref{gap} can be informally stated as the requirement that ``obstructions to specification have small entropy''.
These conditions appeared in \cite{CT} (in a mildly different form), where a third condition was also required that controls how quickly words of the form $uvw$ with $u\in \CCC^p$, $v\in \GGG$, $w\in \CCC^s$ with $|u|,|w| \leq M$ can be extended to words in $\GGG$.  In our setting of LAS with bounded $g$, we produce $\CCC^p,\GGG,\CCC^s$ satisfying \ref{spec}--\ref{gap}, but it is not clear whether the collections we produce satisfy this third condition, and so we cannot apply the results from \cite{CT}.  Rather, we use the following conditions on $\GGG$ that were introduced in \cite{climenhaga1}, which we are able to verify in our setting; roughly speaking, these ask that intersections and unions of words in $\GGG$ are again in $\GGG$ (under some mild conditions).
\begin{enumerate}[label = \textup{\textbf{[III$_\mathrm{\alph{*}}${]}}}]
\item\label{inter}
There is $L\in \NN$ such that if $x\in X$ and $i\leq j\leq k\leq \ell$ are such that $k-j\geq L$ and $x([i,k]), x([j,\ell])\in \GGG$, then $x([j,k])\in \GGG$.
\item\label{union}
There is $L\in \NN$ such that if $x\in X$ and $a\leq i\leq j\leq k\leq \ell$ are such that $k-j\geq L$ and $x([i,k]), x([j,\ell]), x([a,\ell])\in \GGG$, then $x([i,\ell])\in \GGG$.
\end{enumerate}
It was shown in \cite{climenhaga1} that \ref{spec}--\ref{union} guarantee intrinsic ergodicity, as well as strong statistical properties for the unique MME, which we describe next.  We note that although \cite[Theorem 1.1]{climenhaga1} is stated using a stronger version of \ref{inter}--\ref{union} (in which no assumption on $x([a,\ell])$ is given in \ref{union}), the condition we use here is sufficient; the result we need is stated as Theorem \ref{thm:towers} below, and proved in \S\ref{sec:derive} using  \cite[Theorems 3.1(B) and 3.2]{climenhaga1}.

\subsection{Statistical properties}

The unique MMEs that \cite[Theorem 1.1]{climenhaga1} produces will have the following strong statistical properties.

\begin{definition}
A measure $\mu\in \MMM(X)$ satisfies an \textbf{upper Gibbs bound} if there is $Q_1>0$ such that $\mu([w]) \leq Q_1 e^{-|w| h(X)}$ for all $w\in \LLL(X)$.
\end{definition}

\begin{definition}
We say that $\mu\in \MMM(X)$ is the \textbf{limiting distribution of periodic orbits} if it is the weak* limit of the \textbf{periodic orbit measures}
\begin{equation}\label{eqn:mun}
\mu_n = \frac 1{ |\Per_n(X)|}
\sum_{x\in \Per_n(X)} \delta_x,
\end{equation}
where $\Per_n(X) = \{x\in X \mid \sigma^n x=x \}$ is the set of $n$-periodic points.
\end{definition}

\begin{definition}
The \defn{Bernoulli scheme} over a state space $S$ with probability vector $p = (p_a)_{a\in S}$ is the measure-preserving system $(S^\ZZ, \sigma, \mu_p)$, where $\sigma$ is the left shift map and $\mu_p([w]) = \prod_{i=1}^{\abs{w}} p_{a_i}$ for every $w\in S^*$.  We say that $\mu\in \MMM(X)$ has the  \defn{Bernoulli property} if $(X,\sigma,\mu)$ is measure-theoretically isomorphic to a Bernoulli scheme.
\end{definition}

\begin{definition}
A measure $\mu\in \MMM(X)$ has \defn{exponential decay of correlations} for H\"older observables if there is $\theta\in (0,1)$ such that for every pair of H\"older continuous functions $\psi_1,\psi_2 \colon X\to \RR$, there exists a constant $K(\psi_1,\psi_2)>0$ such that
\[
\abs{\int (\psi_1\circ \sigma^n) \psi_2 \,d\mu - \int \psi_1\,d\mu \int \psi_2\,d\mu} \leq K(\psi_1,\psi_2) \theta^{\abs{n}}
\text{  for every $n\in \ZZ$}.
\]
\end{definition}

\begin{definition}
Say that $\mu\in \MMM(X)$ satisfies the \defn{central limit theorem} for H\"older observables if for every H\"older continuous $\psi\colon X\to \RR$ with $\int \psi\,d\mu = 0$, the quantity $\frac 1{\sqrt n} S_n \psi = \frac 1{\sqrt n} \sum_{k=0}^{n-1} \psi\circ \sigma^k$ converges in distribution to a normal distribution $\mathcal{N}(0,\sigma_\psi)$ for some $\sigma_\psi\geq 0$; that is, if
\[
\lim_{n\to\infty} \mu \left\{ x\in X \mid \frac 1{\sqrt{n}} S_n \psi(x)  \leq \tau \right\}
= \frac 1{\sigma_\psi\sqrt{2\pi}} \int_{-\infty}^\tau e^{-t^2/(2\sigma_\psi^2)} \,dt
\]
for every $\tau\in \RR$.  (When $\sigma_\psi=0$ the convergence is to the Heaviside function.)

Recall that a function $\psi$ is \defn{cohomologous to a constant} if there are a measurable function $u\colon X\to \RR$ and a constant $c\in \RR$ such that $\psi(x) = u(x) - u(\sigma x) + c$ for $\mu$-a.e.\ $x\in X$.  It is typically true that in the central limit theorem the variance $\sigma_\psi^2$ is 0 if and only if $\psi$ is cohomologous to a constant.  This will hold for us as well.
\end{definition}

\subsection{A uniqueness result with non-uniform specification}

To prove Theorem \ref{thm:las-main}, we will need the following uniqueness result, which combines several theorems and remarks from \cite{climenhaga1}; see \S\ref{sec:derive} for details.

\begin{theorem}\label{thm:towers}
Let $X$ be a subshift on a finite alphabet and suppose that there are $\CCC^p, \GGG, \CCC^s \subset \LLL(X)$ satisfying \ref{spec}, \ref{gap}, \ref{inter}, and \ref{union}.  Then $X$ has a unique MME $\mu$, and $\mu$ satisfies the central limit theorem for H\"older observables.

If in addition $\tau$ from \ref{spec} is such that $\gcd \{ |w| + \tau : w\in \GGG\} = 1$, then $\mu$ is the limiting distribution of periodic orbits, the system $(X,\sigma,\mu)$ is Bernoulli, and has exponential decay of correlations for H\"older observables.
\end{theorem}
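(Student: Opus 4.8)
The plan is to deduce Theorem~\ref{thm:towers} directly from the technical machinery of \cite{climenhaga1}, rather than reproving anything from scratch. The conditions \ref{spec}--\ref{union} are precisely designed to match (variants of) the hypotheses under which \cite{climenhaga1} constructs a countable-state Markov (tower) model for $X$ and transfers its thermodynamic and statistical properties back to $X$. I would organize the deduction around the two cited results: \cite[Theorem 3.1(B)]{climenhaga1}, which should yield uniqueness of the MME together with a Gibbs-type bound and the tower representation, and \cite[Theorem 3.2]{climenhaga1}, which should yield the statistical conclusions (the CLT always; and the limiting distribution of periodic orbits, the Bernoulli property, and exponential decay of correlations under an aperiodicity hypothesis).

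First I would set up the dictionary between our hypotheses and those of \cite{climenhaga1}. Condition \ref{spec} is exactly the statement that $\GGG$ has specification, with the extra compatibility of the connecting word $u$ on final and initial segments that the tower construction needs, and \ref{gap} is the entropy gap asserting that the obstruction collections $\CCC^p$, $\CCC^s$, and $\LLL \setminus \CCC^p \GGG \CCC^s$ all have entropy strictly below $h(X)$. Conditions \ref{inter} and \ref{union} are the intersection and union properties of $\GGG$ that give the collection of ``good'' words enough closure structure to build first-return words and run the tower argument. With these identifications in place, the uniqueness of $\mu$ (and a Gibbs bound along the way) follows from \cite[Theorem 3.1(B)]{climenhaga1}, and the CLT for H\"older observables, together with the cohomology characterization of degenerate variance, follows from \cite[Theorem 3.2]{climenhaga1}.

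The hard part will be justifying that our weaker form of \ref{union}---which includes the extra hypothesis $x([a,\ell])\in\GGG$ absent from the version used in \cite[Theorem 1.1]{climenhaga1}---is still sufficient. Since we cannot simply quote Theorem~1.1, I would instead trace through the proofs of \cite[Theorems 3.1(B) and 3.2]{climenhaga1} to locate each application of the union property and check that in every such application the word playing the role of $x([a,\ell])$ already lies in $\GGG$; intuitively this holds because the tower construction only ever merges good words inside a context that is itself good, so the extra hypothesis is automatically met. Finally, for the second half of the theorem, I would verify that the condition $\gcd\{|w|+\tau : w\in\GGG\}=1$ translates into aperiodicity of the associated Markov model; this is the standard period condition that upgrades the conclusions to the limiting distribution of periodic orbits, the Bernoulli property, and exponential decay of correlations, all of which are then immediate from \cite[Theorem 3.2]{climenhaga1} once aperiodicity is known.
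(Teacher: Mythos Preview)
Your overall strategy---derive everything from \cite[Theorems 3.1(B) and 3.2]{climenhaga1}---matches the paper's, but two points need correction.

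First, you have the roles of the two cited theorems reversed. In the paper's argument, \cite[Theorem 3.2]{climenhaga1} is applied \emph{first}: given $\GGG$ satisfying \ref{spec}, \ref{gap}, \ref{inter}, \ref{union}, it produces a new collection $\FFF\subset\LLL(X)$ that is closed under concatenation (specification with $\tau=0$), satisfies its own version of \ref{gap}, and has $\gcd\{|v|:v\in\FFF\}=\gcd\{|w|+\tau:w\in\GGG\}$; it moreover verifies further structural conditions ([\textbf{II}$'$] and [\textbf{III}$^*$]) for $\FFF$. \cite[Theorem 3.1]{climenhaga1} is then applied to $\FFF$ and gives \emph{all} the conclusions---uniqueness, Bernoulli up to a period $d$, exponential decay, CLT---with $d=\gcd\{|v|:v\in\FFF\}$. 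So there is no need to trace through proofs looking for uses of \ref{union}: Theorem 3.2 is already stated for the weaker \ref{inter}--\ref{union}, and that is precisely why the paper invokes 3.1(B) and 3.2 rather than \cite[Theorem 1.1]{climenhaga1}.

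Second, and more seriously, you miss a genuine gap concerning periodic orbits. The equidistribution result in \cite{climenhaga1} is stated for measures averaging over $\{x:\sigma^k x=x\text{ for some }k\leq n\}$, whereas \eqref{eqn:mun} averages over $\Per_n(X)=\{x:\sigma^n x=x\}$. The paper bridges this by an additional argument: using \cite[Lemma 4.5]{climenhaga1} for $\FFF$ together with free concatenation and $\gcd\{|v|:v\in\FFF\}=1$, one obtains a constant $c>0$ with $|\Per_n(X)|\geq|\FFF_n|\geq c\,e^{nh(X)}$ for all large $n$; then any weak* limit of the $\mu_n$ from \eqref{eqn:mun} is an MME by Lemma~\ref{lem:build-mme}, hence equals $\mu$ by uniqueness. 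Your claim that the limiting distribution of periodic orbits is ``immediate'' once aperiodicity is known overlooks this step.
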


\section{Proof of Theorem \ref{thm:las-main}}\label{sec:main-pf}

To prove our main result on LAS with bounded $g$, we need to begin by showing that even if $X$ is not irreducible, we can pass to an irreducible subshift that has the same periodic points and invariant measures as $X$, and which retains the same LAS property as $X$.  We do this in \S\ref{sec:irreducible}; then in \S\ref{sec:entropy-minimal} we show that irreducibility and LAS with bounded $g$ are enough to imply entropy minimality (Theorem \ref{LASentmin}).  Finally, in \S\ref{sec:complete-the-pf} we complete the proof of Theorem \ref{thm:las-main} by producing $\CCC^p,\GGG,\CCC^s$ that satisfy conditions \ref{spec}--\ref{union} and hence allow us to apply the results on \cite{climenhaga1}. Entropy minimality is the crucial step in verifying condition \ref{gap}.

\subsection{Irreducible subshifts}\label{sec:irreducible}

Intuitively, irreducibility might ``feel'' like a weaker property than LAS with bounded $g$, since it places no restrictions on the distance required to concatenate two words.
However, LAS does not quite imply irreducibility due to a simple degenerate case; there could be words in $\mathcal{L}(X)$ which can only appear finitely many times in points of $X$. For instance, take $X = \{x \in \{0,1\}^{\mathbb{Z}}  :  |\{n  :  x(n) = 1\}| \leq 1\}$, the set of $0$-$1$ sequences with at most one $1$. This is clearly not irreducible, but satisfies LAS with $g \equiv 1$; any word can be changed to a string of $0$s with at most one change, and a string of $0$s can legally precede any word in $\mathcal{L}(X)$. 

To avoid such situations, we pass to a natural subshift: recall that the \defn{measure center} $\hat{X} \subset X$ is the union of the supports of all invariant measures on $X$.  This a closed shift-invariant subset of $X$, which has the same simplex of invariant measures as $X$ by definition, and hence also has the same set of periodic points (since each periodic orbit supports a unique invariant measure.)

\begin{theorem}\label{measure-center}
If $X$ has LAS with mistake function $g(n)$, then its measure center $\hat{X}$ is irreducible and has LAS with mistake function $g(n)$.
\end{theorem}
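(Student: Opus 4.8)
The plan is to give a language-level description of $\hat X$, reduce both assertions to the problem of placing certain glued words into the measure centre, and then attack that problem by manufacturing invariant measures via LAS together with weak* limits of empirical measures (as in Lemma~\ref{lem:build-mme}). Throughout I use the standard description of the measure centre at the level of the language: a word $w$ lies in $\LL(\hat X)$ if and only if $\mu([w])>0$ for some invariant $\mu$, equivalently if and only if $w$ occurs with positive frequency in some point of $X$ (for one direction take $\mu$ ergodic and use the ergodic theorem to produce a generic point; for the other, pass to a weak* limit of empirical measures). Two consequences I would isolate at the outset: every $w\in\LL(\hat X)$ occurs infinitely often in both directions in some point of $\hat X$, and for every $N$ there is a word in $\LL(\hat X)$ containing at least $N$ pairwise-disjoint copies of $w$.

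The engine is the following approximation fact: for every $w\in\LL(X)$ there is $\tilde w\in\LL(\hat X)$ with $d_H(\tilde w,w)\le g(|w|)$. To prove it, use the multi-word concatenation from the proof of Lemma~\ref{LAS=>AS} to obtain $\bar w^{(1)}\cdots \bar w^{(k-1)}w\in\LL(X)$ with each $d_H(\bar w^{(i)},w)\le g(|w|)$, choose $x_k$ in the corresponding cylinder, and let $\mu$ be a weak* limit of the empirical measures $\frac1{k|w|}\sum_{i=0}^{k|w|-1}\sigma_*^i\delta_{x_k}$. The $k$ block-aligned positions land inside the clopen Hamming ball $B=\bigcup_{d_H(\tilde w,w)\le g(|w|)}[\tilde w]$, so $\mu(B)=\lim_k\mu_k(B)\ge 1/|w|>0$; since $B$ is a finite union of cylinders, some $[\tilde w]\subset B$ has positive $\mu$-measure and hence $\tilde w\in\LL(\hat X)$.

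The crux is LAS for $\hat X$: given $u,v\in\LL(\hat X)$, I must produce $\bar u$ with $d_H(\bar u,u)\le g(|u|)$ and $\bar u v\in\LL(\hat X)$. LAS of $X$ supplies $\bar u v\in\LL(X)$, and the real task is to promote this to the measure centre, i.e.\ to build an invariant measure charging $[\bar u v]$. One-sidedness is exactly what makes the \emph{shape} of the junction correct: in any single gluing only the first word is altered, so the future block $v$ (indeed any clean word placed to its right) survives untouched, and the junction we create genuinely is $\bar u v$ rather than a Hamming approximation of it. The difficulty I expect to be the main obstacle is that one clean occurrence is not enough — charging $[\bar u v]$ forces $\bar u v$ to recur with \emph{positive frequency}, whereas a concatenation built by LAS only guarantees that the single final, unmodified block is clean. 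I would attack this by exploiting the recurrence of $v$ in $\hat X$ (clean copies of $v$, and of long $v$-rich words, are abundant by the first paragraph) to refresh the configuration after each junction, gluing a $u$-modification in front of successive clean $v$-blocks and passing to a weak* limit; controlling where the $g$-many permitted mistakes are allowed to fall in each block, so that a positive density of the inserted junctions $\bar u v$ stay clean, is the delicate heart of the argument, and it is here that sublinearity of $g$ (mistakes per block tending to $0$) must be used.

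Finally, irreducibility of $\hat X$ follows cleanly from LAS of $\hat X$ together with the recurrence consequences above, so I would present it last. Given $u,v\in\LL(\hat X)$, choose $N$ large enough that $g(N|u|/\delta_u)<N$ (possible since $g$ is sublinear and nondecreasing, with $\delta_u$ the frequency of $u$ in a $\mu_u$-generic point), a word $Q\in\LL(\hat X)$ containing at least $N$ pairwise-disjoint copies of $u$, and a word $P\in\LL(\hat X)$ ending in $v$. Applying LAS of $\hat X$ to the pair $(Q,P)$ yields $\bar Q P\in\LL(\hat X)$ with $d_H(\bar Q,Q)\le g(|Q|)<N$; since each altered coordinate destroys at most one of the disjoint copies of $u$, at least one clean copy of $u$ survives in $\bar Q$, and it precedes the terminal $v$ of $P$. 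Hence $\bar Q P$ contains $uwv$ as a subword for some $w$, and factoriality of $\LL(\hat X)$ gives $uwv\in\LL(\hat X)$, which is precisely irreducibility.
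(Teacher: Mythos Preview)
Your framework is sound --- the language-level description of $\hat X$, the empirical-measure machinery, and the approximation fact are all correct and useful --- but the proof of LAS for $\hat X$ has a genuine gap, and you have essentially flagged it yourself. You write that the ``delicate heart'' is ``controlling where the $g$-many permitted mistakes are allowed to fall in each block'', but LAS gives you \emph{no} control over where mistakes fall. Concretely: if you iterate LAS on $u, V, u, V, \dots, u, V$ with $V$ starting with $v$, the adversary is free to place the $g(|V|)$ mistakes in each $\bar V^{(i)}$ at the very first $|v|$ coordinates, so every junction $\bar u^{(i)}\bar V^{(i)}$ except the last loses its clean $v$. Sublinearity of $g$ bounds the \emph{number} of mistakes per block, not their \emph{location}; you end up with one clean junction in a word of length $\sim k(|u|+|V|)$, density $\to 0$, and the weak* limit does not charge any $[\bar u v]$. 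Your irreducibility paragraph then inherits this gap, since it invokes LAS of $\hat X$.

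The missing idea is the paper's Lemma~\ref{lem:u-to-v}: for each $u\in\LL(\hat X)$ there is a bound $N_u$ so that for \emph{every} $w\in\LL(X)$ one finds $z$ with $|z|\le N_u$ and $uzw\in\LL(X)$. Its proof is exactly the ``many disjoint copies of $u$ survive the $g(|Q|)$ mistakes'' trick you used for irreducibility --- but applied with LAS of $X$ (not $\hat X$), and \emph{before} attempting LAS of $\hat X$. With this bounded-gap lemma in hand, the paper alternates it for $u$ and $v$ to build arbitrarily long words $vy^{(j)}uz^{(j)}\bar u v\cdots vy^{(1)}uz^{(1)}\bar u v\in\LL(X)$ with all $|y^{(i)}|,|z^{(i)}|\le N$; a compactness limit gives a point whose left tail has this form, and since only finitely many $z$'s are possible, pigeonhole yields some fixed $z$ with positive upper density, hence $\mu([uz\bar u v])>0$ for a limit measure $\mu$. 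This single stroke gives $uz\bar u v\in\LL(\hat X)$, establishing irreducibility and LAS of $\hat X$ simultaneously. So you had the right ingredient (the survival-of-many-copies trick), but deployed it too late and on the wrong shift.
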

\begin{proof}
We start by using LAS to get a preliminary version of irreducibility.

\begin{lemma}\label{lem:u-to-v}
For every $u\in \LLL(\hat{X})$ there is $N=N_u\in \NN$ such that for every $v\in \LLL(X)$ there is a word $z\in A^*$ such that $|z| \leq N$ and $uzv\in \LLL(X)$.
\end{lemma}
\begin{proof}
Since $u\in \LLL(\hat{X})$ there is an ergodic measure $\mu$ such that $\mu([u])>0$.  Let $\epsilon= \frac 12\mu([u])$.  By the ergodic theorem, there is $N_0\in \NN$ such that for all $N\geq N_0$ there is $w\in \LLL_N(X)$ with
\begin{equation}\label{eqn:many-us}
\#\{1\leq k\leq |w|-|u| : w([k,k+|u|)) = u \} \geq \epsilon N.
\end{equation}
Since $\frac{g(n)}{n} \rightarrow 0$, we may choose $N\geq N_0$ such that $g(N) < \epsilon N / |u|$. Fix $w\in \LLL_N(X)$ satisfying \eqref{eqn:many-us}.  Then every $\bar{w}\in B_{g(|w|)}(w)$ has $u$ as a subword, since changing $g(|w|)$ symbols destroys at most $|u| g(|w|) = |u| g(N) < \epsilon N$ occurrences of $u$; thus every such $\bar{w}$ has the form $yuz$ for some $y,z\in A^*$.  By LAS for $X$ there is some such $\bar{w}$ with $\bar{w} v = yuzv \in \LLL(X)$, and thus $uzv \in \LLL(X)$, which proves the lemma.
\end{proof}

Now we use Lemma \ref{lem:u-to-v} iteratively to prove Theorem \ref{measure-center}.  Given $u,v\in \LLL(\hat{X})$, let $N = \max\{N_u,N_v\}$.  Use LAS to get $\bar{u}v\in \LLL(X)$ for some $\bar{u} \in B_{g(|u|)}(u)$, and apply Lemma \ref{lem:u-to-v} to the words $u$ and $\bar{u}v$ to get $z^{(1)}\in A^*$ such that $|z^{(1)}| \leq N$ and $uz^{(1)}\bar{u}v\in \LLL(X)$.  Then apply Lemma \ref{lem:u-to-v} to the words $v$ and $uz^{(1)}\bar{u}v$ to obtain $y^{(1)} \in A^*$ such that $|y^{(1)}| \leq N$ and $vy^{(1)}uz^{(1)}\bar{u}v$.  Iterating, we obtain $y^{(i)},z^{(i)} \in A^*$ with lengths $\leq N$ such that
\[
w^{(j)} := vy^{(j)}uz^{(j)}\bar{u}vy^{(j-1)}uz^{(j-1)}\bar{u}v\cdots vy^{(1)}uz^{(1)}\bar{u}v \in \LLL(X) \text{ for all } j\in \NN.
\]
By compactness there is $x\in X$ such that
\[
x_{(-\infty,0]} = \cdots vy^{(j)}uz^{(j)} \bar{u}v \cdots vy^{(1)}uz^{(1)}\bar{u}v.
\]
Let $\mu_n = \frac 1n \sum_{k=0}^{n-1} \delta_x \circ \sigma^k$.  Each $z^{(j)}$ is chosen from the finite set $\bigcup_{n=0}^N A^n$, so there is $z\in \bigcup_{n=0}^N A^n$ for which $\{j\in \NN : z^{(j)} = z\}$ has positive upper density, and thus there is some subsequence $n_i\to \infty$ such that $\liminf_i \mu_{n_i}([uz\bar{u}v])>0$.  By passing to a further subsequence if necessary, we can consider the weak* limit $\mu$ of the sequence $\mu_{n_i}$, which is $\sigma$-invariant by standard arguments and has $\mu([uz\bar{u}v])>0$ by our choice of $n_i$. We conclude that $uz\bar{u}v\in \LLL(\hat{X})$, implying irreducibility. Clearly the subword $\bar{u}v \in \LLL(\hat{X})$ as well, which implies LAS and completes the proof of Theorem \ref{measure-center}.
\end{proof}

\begin{remark}
The proof of Lemma \ref{lem:u-to-v} shows that for every $u\in \LLL(\hat{X})$, there is a word $w\in \LLL(X)$ that contains at least $g(|w|)+1$ disjoint instances of $u$ as subwords, and the paragraphs below that lemma show that this property in fact characterizes words in $\LLL(\hat{X})$.  In particular, when $g\equiv m$,  we can write $\mathcal{O}$ for the set of words $u\in \LLL(X)$ such that no point of $X$ contains $m+1$ disjoint occurrences of $u$, and conclude that $\hat{X}$ is the subshift of points of $X$ containing no word from $\mathcal{O}$.
\end{remark}

\subsection{Entropy minimality}\label{sec:entropy-minimal}

Before proving that irreducibility and LAS with bounded $g$ are enough to obtain entropy minimality, we need to establish some counting estimates.  Recall from Lemma \ref{wordcount} that $|\LL_n(X)| \geq e^{nh(X)}$ for every $n$.  In general the definition of $h(X)$ gives the upper bound $|\LL_n(X)| \leq C_n e^{nh(X)}$ for some subexponential $C_n$.  An important part of the uniqueness proof in \cite{rB74,CT} is to prove that $C_n$ can be taken to be bounded.  At this stage of our proof we do not yet get quite this bound, but we can prove that $C_n$ grows at most polynomially for subshifts with almost specification; note that by Lemma~\ref{LAS=>AS}, the same is true for subshifts with LAS.


\begin{lemma}\label{upperbound}
If $X$ is a subshift with almost specification with constant $g \equiv m$, then for every $n$,
\begin{equation}\label{upperboundeq}
|\LL_n(X)| \leq |A|^{2m} n^{2m} e^{n h(X)}.
\end{equation}
\end{lemma}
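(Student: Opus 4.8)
The plan is to exploit the constant mistake function to glue $k$ arbitrary length-$n$ words into a single word of length $kn$, changing only boundedly many symbols in each block, and then to control how badly this gluing map can fail to be injective. Throughout, write $m$ for the constant value of $g$ and let $N$ denote the maximal cardinality of a Hamming ball of radius $m$ among length-$n$ words, so that $|B_m(w)| \le N$ for every $w \in \LLL_n(X)$. A crude estimate on $N$ suffices: choosing at most $m$ of the $n$ coordinates to alter, together with a new letter in each, gives $N \le \sum_{j=0}^m \binom{n}{j}(|A|-1)^j \le |A|^{2m} n^{2m}$, where the final (very lossy) bound is all that is needed.

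First I would fix $k \in \NN$ and apply almost specification to an arbitrary $k$-tuple $(w^{(1)}, \dots, w^{(k)}) \in \LLL_n(X)^k$: this produces words $\bar{w}^{(i)}$ with $d_H(\bar{w}^{(i)}, w^{(i)}) \le m$ and $|\bar{w}^{(i)}| = n$ whose concatenation $\Phi(w^{(1)}, \dots, w^{(k)}) := \bar{w}^{(1)} \cdots \bar{w}^{(k)}$ lies in $\LLL_{kn}(X)$. Making one such choice for each tuple defines a map $\Phi \colon \LLL_n(X)^k \to \LLL_{kn}(X)$. The key observation is that the block boundaries of the image sit exactly at the multiples of $n$, so from $W = \Phi(w^{(1)}, \dots, w^{(k)})$ one recovers each $\bar{w}^{(i)}$ as the $i$-th length-$n$ block of $W$, and hence each $w^{(i)}$ must lie in $B_m(\bar{w}^{(i)})$. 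Consequently the fiber of $\Phi$ over any word of $\LLL_{kn}(X)$ has at most $N^k$ elements, and since the fibers partition $\LLL_n(X)^k$ we obtain $|\LLL_n(X)|^k \le N^k \, |\LLL_{kn}(X)|$.

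To finish, I would take $k$-th roots and let $k \to \infty$. Since $\LLL(X)$ is factorial, $\frac{1}{kn} \ln |\LLL_{kn}(X)| \to h(X)$ by Definition \ref{topent}, so $|\LLL_{kn}(X)|^{1/k} = \bigl(|\LLL_{kn}(X)|^{1/(kn)}\bigr)^{n} \to e^{nh(X)}$. The inequality of the previous paragraph then yields $|\LLL_n(X)| \le N\, e^{nh(X)} \le |A|^{2m} n^{2m}\, e^{nh(X)}$, as desired.

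I expect the only point genuinely requiring care to be the fiber-counting step: one must check that the clean length-$n$ block structure of the gluing map is preserved (which holds because Hamming modifications do not change lengths), so that the modified blocks $\bar{w}^{(i)}$ — and therefore the Hamming balls containing the original $w^{(i)}$ — can be read off unambiguously from the concatenation. Everything else is a routine passage to the limit, and the wastefulness of the $|A|^{2m} n^{2m}$ bound means no optimization of the ball-size estimate is necessary.
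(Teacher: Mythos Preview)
Your proof is correct and follows essentially the same strategy as the paper: glue $k$ words of length $n$ via almost specification, exploit that each input block is recoverable from the output up to a Hamming ball of radius $m$, and send $k\to\infty$. The only difference is packaging---the paper first passes to a maximal $(2m+1)$-separated subset $S_n\subset\LL_n(X)$ so that the gluing map becomes genuinely injective on $(S_n)^k$, whereas you keep all of $\LL_n(X)^k$ and instead bound the fibers of the gluing map by $N^k$; these are the standard dual ``packing versus covering'' formulations of the same estimate and yield the same bound.
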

\begin{proof}
Suppose that $X$ is a subshift satisfying AS with $g \equiv m$.
Choose a maximal $(2m+1)$-separated subset $S_n \subset \LL_n(X)$ with respect to Hamming distance; since the cardinality of a Hamming ball of radius $2m$ is clearly bounded from above by $|A|^{2m} n^{2m}$, we have
\begin{equation}\label{eqn:Sn}
|S_n| \geq |\LL_n(X)|\cdot |A|^{-2m} n^{-2m}. 
\end{equation}
For any $k\in \NN$, almost specification lets us define $f\colon (S_n)^k \to \LLL_{kn}(X)$ by
$f(w^{(1)}, \ldots, w^{(k)}) = \bar{w}^{(1)} \ldots \bar{w}^{(k-1)} \bar{w}^{(k)} \in \LL_{kn}(X)$ where
$\bar{w}^{(i)} \in B_m(w^{(i)})$ for $1 \leq i \leq k$. Then the map $f$ is clearly injective, since for any $s \neq s'$ in $S$, the Hamming balls $B_m(s)$ and $B_m(s')$ are disjoint. 
Using \eqref{eqn:Sn}, this gives
\begin{align*}
|\LL_{kn}(X)| &\geq |S_n|^k \geq |\LL_n(X)|^k|A|^{-2mk}n^{-2mk}, \textrm{ and so} \\
\tfrac 1{kn} \ln|\LL_{kn}(X)| &\geq \tfrac 1n\ln|\LL_n(X)| - \tfrac {2m}n \ln(|A|n).
\end{align*}
Sending $k\to\infty$ gives 
$\frac 1n\ln|\LL_n(X)| \leq h(X) + \frac {2m}n\ln(|A| n)$; then multiplying by $n$ and taking exponentials gives \eqref{upperboundeq}.
\end{proof}


The following technical theorem will be a main tool in our proof that LAS and irreducibility imply entropy minimality, and may be of some independent interest.
It shows that shifts with entropy minimality satisfy a sort of weakened Gibbs counting bound on the number of words that end with a given word $w$, as long as the overall word complexity function satisfies a polynomial upper bound of the sort just proved.

\begin{theorem}\label{entminthm}
Suppose that $X$ is a subshift for which there exist $C,r>0$ so that $|\LL_n(X)| \leq Cn^r e^{n h(X)}$ for every $n$, and that $\mu$ is a measure of maximal entropy on $X$. Then, for every $w \in \mathcal{L}(X)$ with $\mu([w]) > 0$, there exists $\epsilon > 0$ so that $|\LL_n(X) \cap \mathcal{L}(X)w| > \epsilon n^{-1/2} e^{n h(X)}$ for all $n \geq |w|$.
\end{theorem}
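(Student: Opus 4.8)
The plan is to fix an ergodic measure of maximal entropy that charges $[w]$ and convert the measure of the cylinder $[w]$ into a word count, using the polynomial complexity bound to guarantee that this measure cannot concentrate on too few words. As a first reduction, since measure-theoretic entropy is affine, the ergodic decomposition of the given MME $\mu$ consists ($\mu$-a.e.) of ergodic MMEs, and because $\mu([w])>0$ at least one ergodic component $\nu$ has $\nu([w])=:\delta>0$. As $|\LL_n(X)\cap \LL(X)w|$ depends only on $X$ and $w$, it suffices to prove the bound using any such ergodic MME, so I assume from now on that $\mu$ is ergodic with $\mu([w])=\delta>0$ and write $m=|w|$.

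The basic handle is a stationarity identity. The cylinders $[u]$ with $u\in \LL_n(X)$ ending in $w$ are pairwise disjoint, and their union is $\{x\in X : x([n-m+1,n])=w\}$, which by shift-invariance has measure $\mu([w])=\delta$. Hence
\[
\sum_{u\in \LL_n(X)\cap \LL(X)w} \mu([u]) = \delta .
\]
A lower bound on the number of such $u$ then follows from an upper bound on how much $\mu$-mass a single word can carry: if every relevant $u$ satisfied $\mu([u])\le t_n$, the count would be at least $\delta/t_n$, and Lemma~\ref{wordcount} (which gives $|\LL_n(X)|\ge e^{nh(X)}$) identifies $e^{-nh(X)}$ as the natural scale for $t_n$.

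The polynomial complexity hypothesis is what prevents $\mu$ from concentrating. Writing $H_n$ for the entropy of the length-$n$ cylinder partition, subadditivity gives $H_n\ge nh(X)$, while the maximum-entropy bound gives $H_n\le \ln|\LL_n(X)|\le nh(X)+r\ln n+\ln C$. Thus the relative entropy of $\mu$ on length-$n$ words with respect to the uniform distribution is $O(\ln n)$, so $\mu$ is close to uniform on $\LL_n(X)$ at the scale $e^{-nh(X)}$. From this near-uniformity one extracts, via a thresholding argument based on the $O(\ln n)$ relative-entropy bound, that a definite proportion of the mass $\delta$ sits on words $u$ with $\mu([u])$ only polynomially larger than $e^{-nh(X)}$; dividing that mass by the per-word cap yields a lower bound of the form $\epsilon\, n^{-\alpha}e^{nh(X)}$ for the count.

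The main obstacle is sharpening the exponent to exactly $-1/2$, uniformly over $w$. The crude near-uniformity estimate only controls concentration logarithmically, and so produces an exponent $\alpha$ that degrades as $\delta=\mu([w])$ shrinks; it does not by itself give the clean, $w$-independent exponent $1/2$. To reach $n^{-1/2}$ I expect to exploit the \emph{renewal} structure of the occurrences of $w$ rather than treat the mass $\delta$ as an undifferentiated lump. Concretely, one records, for $\mu$-\emph{generic} words of length $n$ (produced by the Shannon--McMillan--Breiman theorem as a family of at least $\tfrac12 e^{n(h(X)-o(1))}$ words of nearly uniform cylinder measure), the positions at which $w$ ends, and analyses how these $\Theta(\delta n)$ endpoints distribute among the $\Theta(n)$ available positions. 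The factor $n^{-1/2}$ is the worst-case \emph{return density} at a prescribed endpoint: in generating-function language it corresponds to a $(1-e^{h(X)}x)^{-1/2}$-type singularity, the critical case in which the endpoints of $w$ behave like the return times of a recurrent random walk. Turning this heuristic into a genuine lower bound at the single length $n$ — as opposed to at some nearby length, which a pigeonhole over endpoints supplies more easily — is the delicate point, and is where I would concentrate the effort.
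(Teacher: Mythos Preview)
Your reduction to an ergodic MME and the stationarity identity
\[
\sum_{u\in \LL_n(X)\cap \LL(X)w} \mu([u]) = \mu([w]) =: \delta
\]
are both correct, and your diagnosis that a threshold/relative-entropy argument applied to a set of mass $\delta$ yields an exponent $\alpha$ that degrades as $\delta\to 0$ is exactly right. But the proposed fix via renewal structure and a $(1-e^{h(X)}x)^{-1/2}$ singularity is a red herring: the exponent $1/2$ is \emph{not} intrinsic to the problem, and the paper's own remark following the theorem notes that the same argument gives $n^{-\beta}$ for any $\beta>0$. So you are trying to explain something that is an artifact of a particular parameter choice.

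The missing idea is much simpler than renewal theory. Instead of counting words that end in $w$ (a set of $\mu$-mass $\delta$), use the ergodic theorem to choose a \emph{window length} $N$ so large that
\[
\mu\Big(\bigcup_{i=0}^{N-1}\sigma^i[w]\Big) > \frac{2r}{2r+1},
\]
and count words in $S_n := \{u\in \LL_n(X): u\text{ contains }w\text{ somewhere in its last }|w|+N-1\text{ letters}\}$. This set has $\mu$-mass exceeding $\tfrac{2r}{2r+1}$, so your relative-entropy mechanism (which the paper implements by quoting \cite[Theorem~4.7]{pavlovgap}) now applies to a set of mass close to $1$ rather than to $\delta$. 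Concretely, the cited result gives
\[
|S_n| \ge M\,e^{nh(X)}\,|\LL_n(X)|^{-(1-\mu(S_n))/\mu(S_n)}\,2^{-1/\mu(S_n)},
\]
and since $(1-\mu(S_n))/\mu(S_n)<\tfrac{1}{2r}$ and $|\LL_n(X)|\le Cn^r e^{nh(X)}$, the complexity factor contributes exactly $(Cn^r)^{-1/(2r)}\asymp n^{-1/2}$. Finally, each word in $S_n$ determines a word in $\LL(X)w$ of length in $[n-N+1,n]$, so $|S_n|\le N|A|^{N-1}|\LL_n(X)\cap\LL(X)w|$, and the desired bound follows with $\epsilon$ depending on $N$ (hence on $w$ and $r$), as it must. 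The choice $\tfrac{2r}{2r+1}$ is what produces the exponent $\tfrac12$; replacing $2r$ by any $k$ gives $n^{-r/k}$, confirming that $\tfrac12$ has no special status.
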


\begin{proof}
Suppose that $X$, $C$, $r$, $\mu$, and $w$ are as in the theorem, and denote by $A$ the alphabet of $X$. By ergodic decomposition, we may assume without loss of generality that $\mu$ is ergodic. Use the ergodic theorem to choose $N$ so that $\mu\big(\bigcup_{i = 0}^{N-1} \sigma^i [w] \big) > \frac{2r}{2r+1}$.

Then, for each $n \geq N + |w|$, define $S_n = \mathcal{L}_n(X) \cap \bigcup_{i = n - |w| - N + 1}^{n - |w|} \sigma^{-i} [w]$, the set of $n$-letter words in $\mathcal{L}(X)$ which contain a $w$ somewhere in the last $|w| + N - 1$ letters. We use the notation $\mu(S_n)$ to refer to $\mu\left(\bigcup_{v \in S_n} [v]\right)$; then $\mu(S_n) > \frac{2r}{2r+1}$ for all $n > N + |w|$. Note that this implies $\frac{1 - \mu(S_n)}{\mu(S_n)} < \frac{1}{2r}$.

Now, we will apply Theorem 4.7 from \cite{pavlovgap}. That result is phrased in terms of general expansive systems (not just subshifts) and for topological pressure for a general potential $\phi$ (not just topological entropy.) However, once restricting to our setting (by setting $\phi = 0$), it says that there exists a constant $M$ so that
\[
|S_n| \geq M (e^{nh(X)})^{1/\mu(S_n)} |\LL_n(X)|^{1 - (1/\mu(S_n))} 2^{-1/\mu(S_n)}.
\]
Since $|\LL_n(X)| \leq Cn^r e^{nh(X)}$ and $\mu$ is an MME on $X$, this implies that
\begin{multline}\label{wordbound}
|S_n| \geq M e^{nh(X)} (Cn^r)^{-(1 - \mu(S_n))/\mu(S_n)} 2^{-1/\mu(S_n)} \\ > Me^{nh(X)} (Cn^r)^{-1/(2r)} 2^{-(2r+1)/2r} = \epsilon' n^{-1/2} e^{nh(X)}
\end{multline}
for $\epsilon' = M C^{-1/(2r)} 2^{-(2r+1)/2r}$.

We now note that since each word in $S_n$ ends with $w$, followed by a word of length less than $N$, we have
\begin{equation}\label{wordbound2}
|S_n| \leq 
\sum_{i=0}^{N-1} |\LL_{n-i}(X) \cap \mathcal{L}(X)w| |A|^i \leq N |A|^{N-1} |\LL_n(X) \cap \mathcal{L}(X)w|.
\end{equation}
(For the inequality, note that every word in $\LL_{n-i}(X) \cap \mathcal{L}(X)w$ can be extended on the left to a word in $\LL_{n}(X) \cap \mathcal{L}(X)w$.)  Now, combining (\ref{wordbound}) and (\ref{wordbound2}) yields
\begin{equation}\label{wordbound3}
|\LL_n(X) \cap \mathcal{L}(X)w| > \frac{\epsilon'}{N |A|^{N-1}} n^{-1/2} e^{nh(X)}.
\end{equation}
The desired bound has now been shown for $n \geq N + |w|$ (and $\epsilon = \frac{\epsilon'}{N |A|^{N-1}}$), and since $|\LL_{n}(X) \cap \mathcal{L}(X)w| > 0$ for the finite set of $n$ in $[|w|, N + |w|)$, such a bound trivially still holds for $n \geq |w|$.
\end{proof}

\begin{remark}
In fact Theorem~\ref{entminthm} could be generalized by using any upper bound of the form $|\LL_n(X)| \leq f(n) e^{n h(X)}$ as a hypothesis and deriving, for any $k$, a lower bound of the form $|\LL_s(X) \cap \mathcal{L}(X)w| > \epsilon f(n)^{-1/k} e^{n h(X)}$ as a conclusion (in the choice of $N$ at the start of the proof, replace $2r$ with $k$). In particular, we note that we could bound $|\LL_n(X) \cap \mathcal{L}(X)w|$ from below by some constant multiple of $n^{-\beta} e^{nh(X)}$ for any desired $\beta > 0$; for our purposes, $\beta = 1/2$ will suffice.
\end{remark}

The following corollary is immediate, since every MME for an entropy minimal subshift is fully supported.

\begin{corollary}\label{entmincor}
Suppose that $X$ is an entropy minimal subshift and that there exist $C,r>0$ so that $|\LL_n(X)| \leq Cn^r e^{n h(X)}$ for every $n$. Then, for every $w \in \mathcal{L}(X)$, there exists $\epsilon > 0$ so that $|\LL_n(X) \cap \mathcal{L}(X)w| > \epsilon n^{-1/2} e^{n h(X)}$ for all $n \geq |w|$.
\end{corollary}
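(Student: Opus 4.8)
The plan is to derive this directly from Theorem~\ref{entminthm}. The key observation is that Theorem~\ref{entminthm} already gives exactly the desired counting bound, but only for those words $w$ that receive positive measure under some fixed MME $\mu$; the content of the corollary is that entropy minimality upgrades this to \emph{all} words in $\LL(X)$.

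First I would fix a measure of maximal entropy $\mu$ for $X$; such a measure exists for every subshift, for instance by applying Lemma~\ref{lem:build-mme} with $\DDD = \LL(X)$. Next I would invoke the characterization of entropy minimality recorded after Definition~\ref{entmindef}: a subshift is entropy minimal if and only if all of its measures of maximal entropy are fully supported. Hence the chosen $\mu$ is fully supported, meaning $\mu(U) > 0$ for every nonempty open $U \subset X$. Since for each $w \in \LL(X)$ the cylinder $[w]$ is a nonempty open set, this yields $\mu([w]) > 0$ for every $w \in \LL(X)$.

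With this $\mu$ in hand, the hypotheses of Theorem~\ref{entminthm} are met: the polynomial upper bound $|\LL_n(X)| \leq Cn^r e^{nh(X)}$ is assumed directly, and $\mu([w]) > 0$ now holds for every $w \in \LL(X)$ rather than only for a restricted class of words. Applying the theorem to this $\mu$ and each $w$ produces the bound $|\LL_n(X) \cap \LL(X)w| > \epsilon n^{-1/2} e^{nh(X)}$ for all $n \geq |w|$, which is precisely the conclusion. Since no new estimate is needed, there is no genuine obstacle here; the only point to check with any care is that full support really does translate into positivity on every cylinder $[w]$ with $w \in \LL(X)$, and this is immediate because such cylinders are exactly the nonempty basic open sets of $X$.
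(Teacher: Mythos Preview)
Your proposal is correct and follows exactly the paper's reasoning: the paper states that the corollary is immediate since every MME for an entropy minimal subshift is fully supported, which is precisely the observation you spell out in detail before invoking Theorem~\ref{entminthm}.
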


Now we can prove that LAS (with bounded $g$) and irreducibility together imply entropy minimality.

\begin{theorem}\label{LASentmin}
If $X$ is an irreducible subshift with left almost specification with constant $g \equiv m$, then $X$ is entropy minimal.
\end{theorem}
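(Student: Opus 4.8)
The plan is to argue by contradiction using the equivalent formulation of entropy minimality recorded just after Definition~\ref{entmindef}: $X$ is entropy minimal if and only if $h(X_w)<h(X)$ for every $w\in\mathcal{L}(X)$, where $X_w$ is the subshift of points omitting $w$. So I would suppose that some $w\in\mathcal{L}(X)$ has $h(X_w)=h(X)=:h$. First I would put the counting machinery in force: by Lemma~\ref{LAS=>AS} the shift $X$ has AS with $g\equiv m$, so Lemma~\ref{upperbound} gives the polynomial bound $|\LL_n(X)|\le |A|^{2m}n^{2m}e^{nh}$, and hence the hypotheses of Theorem~\ref{entminthm} hold for any MME of $X$. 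Note also that $X_w\subseteq X$ with $h(X_w)=h$ inherits the same polynomial bound, so Theorem~\ref{entminthm} applies to $X_w$ as well.

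Next I would extract an MME $\mu$ supported on $X_w$ (one exists because $h(X_w)=h$); this $\mu$ is an MME for $X$ with $\mu([w])=0$. Choosing any $u$ with $\mu([u])>0$ --- necessarily $u\in\mathcal{L}(X_w)$, so $u$ omits $w$ --- Theorem~\ref{entminthm} yields $\epsilon>0$ with $|\LL_n(X)\cap\mathcal{L}(X)u|>\epsilon n^{-1/2}e^{nh}$ for all $n\ge|u|$, and the same applied inside $X_w$ shows that words of $X_w$ ending in $u$ are equally abundant. Thus I would have two large, essentially full-entropy families: one of words ending in $u$ that omit $w$, and the ambient family of words ending in $u$ in $X$.

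The engine of the proof is to play these abundances against each other. Using irreducibility I would fix, via a bounded connector as in Lemma~\ref{lem:u-to-v}, a single word $\Omega$ containing $w$ that can legally follow $u$; using LAS (which glues with no gap, changing only the left word by at most $m$ symbols) I would splice $\Omega$ into the abundant $u$-terminated blocks at a chosen junction, passing to a $(2m+1)$-separated subset as in Lemma~\ref{upperbound} to keep the assignment injective. The aim is to show that a definite exponential proportion of the $\gtrsim e^{nh}/\sqrt n$ words ending in $u$ can be forced to contain $w$, so that the $u$-terminated words of $X_w$ (which omit $w$) are exponentially fewer than those of $X$; since the $u$-terminated words of $X_w$ carry the full entropy $h$ of $X_w$, this would yield $h(X_w)<h$, the desired contradiction. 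Alternatively one can feed the $w$-containing family into Lemma~\ref{lem:build-mme} to force an MME that charges $[w]$ with positive frequency at full entropy.

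The hard part is exactly this last comparison. The abundance supplied by Theorem~\ref{entminthm} is only of order $e^{nh}/\mathrm{poly}(n)$, which sits just below the polynomial ceiling rather than above it, and LAS-gluing loses a Hamming-ball factor at each junction; so a naive concatenation merely reproduces $h(X)\ge h$ with no contradiction, and forcing $w$ to appear at positive frequency generically costs exponential entropy by a large-deviations heuristic. The real content must therefore be a careful simultaneous count showing that splicing in $w$ can be accomplished using only the \emph{constant} budget of $m$ changes cheaply enough that it does not cost exponential entropy, so that $w$-containing words dominate. This is precisely the step where boundedness of $g$ --- hence the polynomial, not merely subexponential, correction in Lemma~\ref{upperbound} --- must be used decisively; with unbounded $g$ the correction factor is large enough to absorb the cost of inserting $w$, consistent with the failure of entropy minimality for the $\log\log$ example of Theorem~\ref{doublelogex}.
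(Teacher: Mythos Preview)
Your outline correctly assembles the raw ingredients --- the polynomial upper bound from Lemma~\ref{upperbound} and the $\epsilon n^{-1/2}e^{nh}$ lower bound from Theorem~\ref{entminthm} --- but it stops exactly at the point where the real work begins, and the mechanism you sketch for ``the hard part'' does not close the gap. Splicing a single copy of $w$ into each long word ending in $u$ cannot produce a contradiction: the resulting family still has cardinality at most $|\LL_n(X)|$, so you recover nothing better than $h(X)\ge h$, and a single forced occurrence of $w$ in a word of length $n$ gives frequency $O(1/n)\to 0$, so Lemma~\ref{lem:build-mme} will not produce a measure charging $[w]$. Likewise, showing that many words contain $w$ does not by itself force the $w$-avoiding words to be exponentially few. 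You need a device that \emph{manufactures extra entropy} beyond what $\LL_n(X)$ can accommodate under the polynomial ceiling.

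The paper supplies two ideas you are missing. First, rather than working in $X_w$, one passes (via \cite[Prop.~2.8]{QT}) to an entropy-minimal $Y\subset X$ with $h(Y)=h(X)$, and then chooses a \emph{maximal-change pair}: $v\in\mathcal L(Y)$, $w\in\mathcal L(X)$ so that appending $w$ to the right of $v$ via LAS requires the maximum possible number $i$ of changes. The point is that whenever you LAS-glue a $Y$-word ending in $v$ to something starting with $w$, \emph{all} the changes must land inside the terminal $v$; the rest of the $Y$-word is untouched. This is what makes the gluing map injective --- your proposal to pass to a $(2m+1)$-separated set does not achieve this, because the $\leq m$ changes can occur anywhere. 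Second, the contradiction comes from a \emph{permutation trick}: one glues together $Y$-words of distinct lengths $N,2N,\dots,tN$, separated by a marker word $z$ containing $m{+}1$ disjoint copies each of $w$ and of some $u\notin\mathcal L(Y)$ (so a surviving $u$ distinguishes markers from $Y$-content). For each of $k$ blocks one independently chooses a permutation of $\{1,\dots,t\}$ and words of the corresponding lengths; the surviving $u$'s let one read off the permutation, so the map is injective and one gains a factor $(t!)^k$. Since $\sqrt{t!}$ grows superexponentially in $t$, it overwhelms the polynomial defects $(iN)^{-1/2}$ coming from Theorem~\ref{entminthm}, yielding $|\LL_{k(M+tN)}(X)|>2^k e^{k(M+tN)h(X)}$ and hence $h(X)>h(Y)$. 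Without the maximal-change localization and the factorial growth from permutations, your argument has no source for the strict inequality.
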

\begin{remark}\label{rmk:unbdd}
This result fails if $g$ is unbounded; see \S\ref{sec:unbdd}.  This is the point at which our proof breaks down for unbounded mistake functions.
\end{remark}
\begin{proof}[Proof of Theorem \ref{LASentmin}]
Take $X$ a subshift as in the theorem. By Proposition 2.8 of \cite{QT}, $X$ contains an entropy minimal subshift $Y$ with $h(Y) = h(X)$. We suppose for a contradiction that $Y$ is a proper subset of $X$, and so there exists 
$u \in \mathcal{L}(X) \setminus \mathcal{L}(Y)$. Define $i$ to be the maximum number of changes necessary to a word in $\mathcal{L}(Y)$ when using left almost specification to append a word of $\mathcal{L}(X)$ to its right and yield a word in $\mathcal{L}(X)$, i.e.
\[
i = \max_{v \in \mathcal{L}(Y), w \in \mathcal{L}(X)} \min \{j \ : \ \exists v' \in B_j(v) \textrm{ for which } v'w \in \mathcal{L}(X)\}.
\]
Clearly $i \leq m$ since $X$ has LAS with $g \equiv m$. Choose words $v \in \mathcal{L}(Y)$ and $w \in \mathcal{L}(X)$ achieving the maximum $i$, i.e.\ for $j < i$, there exists no 
$y \in B_{j}(v)$ for which $yw \in \mathcal{L}(X)$. 

By Lemma~\ref{upperbound}, there exist $C,r>0$ such that for all $n$, we have
$|\LL_n(X)| \leq C n^r e^{n h(X)}$. Since $Y \subseteq X$, the same bound holds for $|\LL_n(Y)|$, and since $Y$ is entropy minimal, we can apply Corollary~\ref{entmincor} to $Y$. Since $h(X) = h(Y)$, this yields $\epsilon > 0$ so that for all $n \geq |v|$,
\begin{equation}\label{eqn:Ms}
M_n := |\LL_n(Y) \cap \mathcal{L}(Y) v| > \epsilon n^{-1/2} e^{n h(Y)} = \epsilon n^{-1/2} e^{n h(X)},
\end{equation}
where $v$ is the `maximally changing word' from above. 

By irreducibility of $X$, define 
$z \in \mathcal{L}(X)$ containing $m+1$ disjoint occurrences of $w$, followed by $m+1$ disjoint occurrences of $u$. 
Denote by $N$ the length of $z$, and assume without loss of generality that $N \geq |v|$.
Then by (\ref{eqn:Ms}), for all $i \in \mathbb{N}$, $|M_{iN}| > \epsilon (iN)^{-1/2} e^{iN h(X)}$.
Since $t!$ grows superexponentially, we may fix $t$ such that $\sqrt{t!} > 2(\sqrt{N}e^{Nh(X)}/\epsilon)^t$. Write $M = \sum_{i = 1}^t iN$.

We will make many words in $\LL(X)$ by using left almost specification to almost concatenate words in various $\LL_{iN}(Y)$ with copies of 
$z$ in an alternating fashion. Specifically, for any $k$, we create words in $\LL_{k(M+tN)}(X)$ in the following way: 
\begin{enumerate}[label=(\alph{*})]
\item choose any permutations
$\pi_1, \pi_2, \ldots, \pi_k$ of $\{1,\ldots,t\}$ and words $w^{(i,j)} \in \LL_{iN}(Y) \cap \mathcal{L}(Y) v$ for $1 \leq i \leq t$ and $1 \leq j \leq k$;
\item\label{ul}
define words $u^{(\ell)}$ for $1\leq \ell \leq kt$ by $u^{(i + t(j-1))} = w^{(\pi_j(i),j)}$ for $1\leq i \leq t$ and $1\leq j \leq k$, so 
the list $u^{(1)}$, $u^{(2)}$, $\ldots$, $u^{(kt)}$ consists of the words $w^{(i,1)}$ permuted according to $\pi_1$, 
followed by the words $w^{(i,2)}$ permuted according to $\pi_2$, and so on;
\item given any $\ppi = (\pi_1,\dots, \pi_k)$ and $\ww = (w^{(i,j)})$, define a word $f(\ppi,\ww)\in \LL_{k(M+tN)}(X)$ using LAS iteratively on $u^{(1)},\dots, u^{(\ell)}$ from right to left, as described in detail below.
\end{enumerate}
After defining $f(\ppi,\ww)$, we will show that the map $f$ is injective, which leads to a lower bound on $\LL_{k(M+tN)}(X)$ and hence a lower bound on $h(X)$.  By an appropriate choice of the parameter $t$, we will be able to show that $h(X)>h(Y)$ and hence obtain a contradiction.

Now we define $f(\ppi,\ww) \in \LL_{k(M+tN)}(X)$.
Begin with $u^{(tk)}$, then use left almost specification to find $z^{(tk-1)} \in B_m(z)$ for which $z^{(tk-1)} u^{(tk)} \in \mathcal{L}(X)$. Since $z$ contained $m + 1$ occurrences of $w$, at least one of them remains; delete the portion to the left of the leftmost
remaining occurrence out of those to create a word $\bar{z}^{(tk-1)}$ with length less than or equal to $N$. Then
\begin{itemize}
\item $\bar{z}^{(tk-1)} u^{(tk)} \in \mathcal{L}(X)$, 
\item $\bar{z}^{(tk-1)}$ begins with $w$, and 
\item $\bar{z}^{(tk-1)}$ contains $u$ (since $z$ had $m+1$ disjoint instances of $u$, and
the part deleted to get $\bar{z}^{(tk-1)}$ from $z^{(tk-1)}$ was to the left of all of them). 
\end{itemize}
Extend $u^{(tk-1)}$ on the left by $N - |\bar{z}^{(tk-1)}|$ letters to make a new word $\bar{u}^{(tk-1)} \in \mathcal{L}(Y)$, which still ends with $v$. Use left almost specification to find $U^{(tk-1)} \in B_i(\bar{u}^{(tk-1)})$ for which $U^{(tk-1)} \bar{z}^{(tk-1)} u^{(tk)} \in \mathcal{L}(X)$; we needed less than or equal to $i$ changes to $\bar{u}^{(tk-1)}$ since $\bar{u}^{(tk-1)} \in \mathcal{L}(Y)$. However, recall that $\bar{u}^{(tk-1)}$ ends with $v$ and $\bar{z}^{(tk-1)}$ begins with $w$, and that at least $i$ changes are required to $v$ to concatenate $w$ on its right. Therefore, all $i$ of the changes made in changing $\bar{u}^{(tk-1)}$ to $U^{(tk-1)}$ occurred within the terminal copy of $v$.
 
Observe that the length of $U^{(tk-1)} \bar{z}^{(tk-1)} u^{(tk)}$ is equal to $|u^{(tk-1)}| + N + |u^{(tk)}|$; in other words, even though 
$\bar{z}^{(tk-1)}$ may have length smaller than $N$, $U^{(tk-1)}$ ``corrects'' the loss so that the length of $U^{(tk-1)} \bar{z}^{(tk-1)} u^{(tk)}$ is forced.
Continuing in this fashion we eventually arrive at a word 
\begin{equation}\label{eqn:fpiw}
f(\ppi,\ww)=
U^{(1)} \bar{z}^{(1)} U^{(2)} \bar{z}^{(2)} \ldots U^{(tk-1)} \bar{z}^{(tk-1)} u^{(tk)} \in \LL_{k(M+tN)}(X).
\end{equation}
At each step, let's say we choose the lexicographically minimal possible word to append, so that the procedure described is deterministic.

We claim that $f$ is injective. Suppose
$(\ppi_1,\ww_1) \neq (\ppi_2,\ww_2)$,
that $(\ppi_1,\ww_1)$
induces $u_1^{(1)}, \ldots, u_1^{(tk)}$ as in \ref{ul} above,
and that $u_2^{(1)}, \ldots, u_2^{(tk)}$ is similarly induced by $(\ppi_2,\ww_2)$.
We break into two cases.\\

\noindent
\emph{Case 1:} $\ppi_1 = \ppi_2$.

In this case, $|u_1^{(\ell)}| = |u_2^{(\ell)}|$ for $1 \leq \ell \leq tk$. We may then choose maximal $\ell$ so that 
$u_1^{(\ell)} \neq u_2^{(\ell)}$ (such an $\ell$ must exist since
$(\ppi_1,\ww_1) \neq (\ppi_2,\ww_2)$).

During the creation of $f(\ppi_1,\ww_1)$ and $f(\ppi_2,\ww_2)$,
the first $tk - \ell - 1$ steps will be identical and will yield the same word $\bar{z}^{(\ell + 1)} U^{(\ell + 1)} \ldots \bar{z}^{(tk-1)} u^{(tk)}$. Then, at the next step, the unequal words $u_1^{(\ell)}$ and $u_2^{(\ell)}$ are extended to the left by $N - |\bar{z}^{(\ell + 1)}|$ units, which yields unequal words $\bar{u}^{(\ell)}_1$ and $\bar{u}^{(\ell)}_2$, and then left almost specification is used to find 
$U_1^{(\ell)}$ and $U_2^{(\ell)}$ which can be appended to the left of $\bar{z}^{(\ell + 1)} U^{(\ell + 1)} \ldots \bar{z}^{(tk-1)} u^{(tk)}$. However, as noted above, the only changes made are inside the terminal copies of $v$ within $\bar{u}^{(\ell)}_1$ and $\bar{u}^{(\ell)}_2$, and so the appended words 
$U_1^{(\ell)}$ and $U_2^{(\ell)}$ are still unequal, meaning that the final words
$f(\ppi_1,\ww_1)$ and $f(\ppi_2,\ww_2)$
are also unequal.\\

\noindent
\emph{Case 2:} $\ppi^{(1)} \neq \ppi^{(2)}$.

In this case, there must exist $\ell$ so that $|u_1^{(\ell)}| \neq |u_2^{(\ell)}|$; take $\ell$ to be maximal with this property.
Without loss of generality assume $|u_1^{(\ell)}| > |u_2^{(\ell)}|$. By assumption on $S$, 
$|u_2^{(\ell)} - u_1^{(\ell)}| \geq N$.
Again, in the process of creating 
$f(\ppi_1,\ww_1)$ and $f(\ppi_2,\ww_2)$,
the first $tk - \ell - 1$ steps will be identical, yielding the same word 
$\bar{z}^{(\ell + 1)} U^{(\ell + 1)} \ldots \bar{z}^{(tk-1)} u^{(tk)}$. 

Then, in the construction of $f(\ppi_2,\ww_2)$,
once 
$U_2^{(\ell)}$ is appended to the left of $\bar{z}^{(\ell + 1)} U^{(\ell + 1)} \ldots \bar{z}^{(tk-1)} u^{(tk)}$,
somewhere in the $N$ letters immediately to the left an occurrence of $u$ will be added at the next step (inside some 
$\bar{z}^{(\ell)}_2 \in B_m(z)$). However, the corresponding locations in $U_1^{(\ell)} \bar{z}^{(\ell + 1)} U^{(\ell + 1)} \ldots \bar{z}^{(tk-1)} u^{(tk)}$ are part of $U_1^{(\ell)} \in \mathcal{L}(Y)$ since $|u_1^{(\ell)}| \geq |u_2^{(\ell)}| + N$, and so contain no occurrences of $u$ since $u \notin \mathcal{L}(Y)$. Thus there is a location at which $f(\ppi_2,\ww_2)$
contains a $u$ and $f(\ppi_1,\ww_1)$
does not, meaning that they are unequal.\\

We have shown that $f$ is one-to-one, and so it generates a set within $\LL_{k(M + tN)}(X)$ with cardinality at least
$(t! \prod_{i=1}^t M_{iN})^k$, where we recall that $M_{iN}$ was defined in \eqref{eqn:Ms} and satisfies $M_{iN} \geq \epsilon (iN)^{-1/2} e^{iN h(X)}$.  Thus
\begin{multline}\label{entminbound3}
|\LL_{k(M+tN)}(X)| \geq \left(t! \prod_{i=1}^t M_{iN}\right)^k \geq \left(t! \prod_{i=1}^t \epsilon (iN)^{-1/2} e^{iN h(X)}\right)^k\\
= (e^{Mh(X)} \sqrt{t!} (\epsilon/\sqrt{N})^t)^k > 2^k e^{k(M+tN)h(X)}.
\end{multline}
(Recall that $t$ was chosen so that $\sqrt{t!} > 2(\sqrt{N}e^{Nh(X)}/\epsilon)^t$.)

Now, we take logarithms in (\ref{entminbound3}), divide by $k(M + tN)$, and let $k \rightarrow \infty$, which implies that $h(X) \geq h(Y) + \frac{\log 2}{M + tN}$, contradicting the earlier statement that $h(Y)=h(X)$. Therefore, our original assumption that $Y\neq X$ was false, and we conclude that $Y = X$; since 
$Y$ was entropy minimal, we have shown that $X$ is entropy minimal.
\end{proof}

\subsection{Completion of the proof}\label{sec:complete-the-pf}

Now we use the results from \S\ref{sec:irreducible} and \S\ref{sec:entropy-minimal} to complete the proof of Theorem \ref{thm:las-main}.  By Theorem \ref{measure-center}, we may assume without loss of generality that $X$ is irreducible, since all desired properties are dependent only on periodic points of $X$ and measures on $X$. Define any $m$ so that $g(n) \leq m$ for all $n$.

Define $i$ to be the maximum number of changes necessary to a word in $\mathcal{L}(X)$ when using left almost specification to append a word of $\mathcal{L}(X)$ to its right and yield a word in $\mathcal{L}(X)$, i.e.
\begin{equation}\label{eqn:i}
i = \max_{y \in \mathcal{L}(X), v \in \mathcal{L}(X)} \min \{j \ : \ \exists y' \in B_j(y) \textrm{ for which } y'v \in \mathcal{L}(X)\}.
\end{equation}
We have $i \leq m$ from the LAS property. Choose words $y \in \mathcal{L}(X)$ and $v^{(0)} \in \mathcal{L}(X)$ achieving this maximum, i.e.\ for $j < i$, there exists no $y' \in B_{j}(y)$ for which $y'v^{(0)} \in \mathcal{L}(X)$.  
Let $\LLL^y = \{w\in \LLL : wy\in \LLL \}$.
Given $w\in \LLL^y$ and $v\in \LLL$, let $D(w,v) = \{y' \in B_i(y) : wy'v \in \LLL\}$; this is non-empty because LAS gives $x\in B_i(wy)$ such that $xv\in \LLL$, and by our choice of $y$ and $v$, all the changes between $wy$ and $x$ must happen in the last $|y|$ symbols, so $x=wy'$ for some $y'\in B_i(y)$.

Observe that if $v'\in v\LLL\cap \LLL$ and $w'\in \LLL w\cap \LLL^y$, then $D(w',v') \subset D(w,v)$.  Now define a sequence of words $w^{(k)},v^{(k)}$ for $k\geq 1$ iteratively as follows: If there are $w'\in \LLL w^{(k-1)} \cap \LLL^y$ and $v' \in v^{(k-1)}\LLL \cap \LLL$ such that $D(w',v') \neq D(w^{(k-1)},v^{(k-1)})$, then let $w^{(k)} = w'$ and $v^{(k)} = v'$; if there are no such $v',w'$, terminate the construction.  Because the sets $D(w^{(k)},v^{(k)})$ are finite, nested, and non-empty, the construction terminates at some $k$; putting $v=v^{(k)}$ and choosing $y'\in D(w^{(k)},v)$, we see that for every $x\in \LLL^y \cap \LLL w^{(k)}$ and $z\in \LLL \cap v\LLL$, we have $xy'z\in \LLL$.  We will write $u=w^{(k)}y$, $u' = w^{(k)}y'$, and use this in the following form:
\begin{equation}\label{eqn:uu'}
\text{if } xu\in \LLL \text{ and } z\in v\LLL\cap \LLL \text{ then } xu'z\in \LLL.
\end{equation}

Consider the collection
\[
\mathcal{G} = \{w \in \mathcal{L}(X) \cap v \mathcal{L}(X) \ : \ wu \in \mathcal{L}(X)\}
\]
of words in $\mathcal{L}(X)$ starting with $v$ which can be legally followed by $u$. 

To apply the results from \cite{climenhaga1} we define the ``prefix'' and ``suffix'' collections 
\begin{align*}
\mathcal{C}^{p} &= \{w \in \mathcal{L}(X) \ : \ w \textrm{ does not contain $v$ as a subword}\} \\
\mathcal{C}^{s} &= \{w \in \mathcal{L}(X) \cap u \mathcal{L}(X) \ : \ w \textrm{ contains $u$ only once as a subword}\}.
\end{align*}
Let $\CCC^p \GGG \CCC^s$ denote the set of words that can 
be decomposed as a concatenation of a word in $\mathcal{C}^{p}$, followed by a word in $\mathcal{G}$, followed by a word in $\mathcal{C}^{s}$, and let $\BBB := \LLL \setminus \CCC^p \GGG \CCC^s$ be the set of words that do not admit such a decomposition.
Finally, define $\mathcal{C} = \mathcal{C}^{p} \cup \mathcal{C}^{s} \cup \mathcal{B}$. 

Condition \ref{spec} says that $\GGG$ has specification, and follows from our choice of $\GGG$ and
\eqref{eqn:uu'}.  Given any $w,w'\in \GGG$, we have $wu,w'u \in \LLL\cap v\LLL$ by the definition of $\GGG$, and so \eqref{eqn:uu'} gives $wu'w'u\in \LLL$.  Moreover, $wu'w'$ begins with $v$ (since $w\in \GGG$) and so $wu'w'\in \GGG$, which suffices to prove \ref{spec} since the `gluing word' $u'$ does not depend on the choice of $w,w'\in \GGG$.


Now we prove \ref{gap} by showing that $h(\mathcal{C}) < h(X)$. First, we note that by Theorem~\ref{LASentmin}, $X$ is entropy minimal.
If we define $X_v$ to be the subshift of points of $X$ containing no $v$ as a subword, then by entropy minimality, $h(X_v) < h(X)$. Since $\mathcal{C}^p$ is clearly factorial, $h(\mathcal{C}^p) = h(X_v)$ by Lemma~\ref{entbigger}. Therefore,
$h(\mathcal{C}^p) < h(X)$. To treat $\mathcal{C}^s$, note that if we define
\[
\mathcal{C}' = \{w \in \mathcal{L}(X) \ : \ w \textrm{ does not contain $u$ as a subword}\},
\]
then removing $u$ from the beginning of words in $\mathcal{C}^s$ places $(\mathcal{C}^s)_n$ in bijective correspondence with $(\mathcal{C}')_{n - |u|}$ for $n > |u|$. Then clearly $h(\mathcal{C}^s) = h(\mathcal{C}')$. If we define $X_u$ to be the subshift of points of $X$ containing no $u$ as a subword, then as above, by entropy minimality $h(X_u) < h(X)$. Since $\mathcal{C}'$ is clearly factorial, Lemma~\ref{entbigger} implies that $h(\mathcal{C}') \leq h(X_u)$, and so $h(\mathcal{C}^s) < h(X)$. It remains only to consider $\mathcal{B}$. If $w$ contains disjoint occurrences of $v$ and $u$ where the $v$ occurs to the left of the $u$, then we could write $w = xvyuz$ where $x$ contains no $v$ and $z$ contains no $u$. Then $vy \in \mathcal{G}$, $x \in \mathcal{C}^{p}$, and $uz \in \mathcal{C}^{s}$, meaning that $w \notin \mathcal{B}$. We've then shown that words in $\mathcal{B}$ either contain no $u$ at all, or can be written as $x u y$ where $x$ contains no $v$ and $y$ contains no $u$. This clearly implies that
\begin{equation}\label{Bbound}
|\mathcal{B}_n| \leq |(\mathcal{C}')_n| + \sum_{i = 0}^{n - |u|} |(\mathcal{C}^p)_i| \cdot |(\mathcal{C}')_{n - i}|.
\end{equation}

Choose $\epsilon > 0$ so that $h(\mathcal{C}^p), h(\mathcal{C}') < h(X) - 2\epsilon$. 
Then there is $K$ such that $|\CCC^p_n|, |\CCC'_n| \leq K e^{n(h(X)-\epsilon)}$ for every $n\in \NN$.
Combining this with (\ref{Bbound}) yields
\begin{multline*}
|\BBB_n| \leq K e^{n(h(X)-\epsilon)} + \sum_{i=0}^{n-|u|} K e^{i(h(X)-\epsilon)} K e^{(n-i)(h(X)-\epsilon)} \\
= K e^{n(h(X)-\epsilon)} + \sum_{i=0}^{n-|u|} K^2 e^{n(h(X)-\epsilon)}
\leq (K + nK^2) e^{n(h(X)-\epsilon)}.
\end{multline*}

Taking logs, dividing by $n$, and letting $n \rightarrow \infty$ yields 
$h(\mathcal{B}) \leq h(X) - \epsilon < h(X)$. Since $\mathcal{C} = \mathcal{C}^{p} \cup \mathcal{C}^{s} \cup \mathcal{B}$, we have shown \ref{gap}.

Finally, we prove \ref{inter} and \ref{union} with $L=|v|$.  For \ref{inter},
suppose $x\in X$ and $i\leq j\leq k\leq \ell$ are such that $x([i,k]), x([j,\ell]) \in \GGG$ and $k-j\geq L$.  Then $v$ is a prefix of $x([j,\ell])$; it is also a prefix of $x([j,k])$ since $k-j\geq L=|v|$.  Moreover, $x([i,k]) u \in \LLL$ and hence $x([j,k]) u \in \LLL$ as well, which shows that $x([j,k])\in \GGG$.
For \ref{union}, we see that $v$ is a prefix of $x([i,\ell])$ since it is a prefix of $x([i,k])$.  To show that $x([i,\ell])u\in \LLL$, we need to use the assumption that there is $a\leq i$ with $x([a,\ell]) \in \GGG$, which implies that $x([a,\ell])u\in \LLL$, and hence $x([i,\ell])u\in \LLL$.  Thus $x([i,\ell])\in \GGG$, proving \ref{union}.

We have verified the hypotheses of the first part of Theorem \ref{thm:towers}, which is enough to establish uniqueness and the central limit theorem.  The remaining conclusions of Theorem \ref{thm:las-main} will follow from the second half of Theorem \ref{thm:towers} once we prove that
\begin{equation}\label{eqn:gcd1}
\gcd\{|w| + |u| : w\in \GGG\} = 1.
\end{equation}
To see this, we produce
words $w,w' \in \GGG$ whose lengths differ by exactly $1$. First let $x\in \LLL$ be a word that starts and ends with $v$, and contains at least $2m+1$ disjoint occurrences of the word $v$; that is, $x=vz^{(1)}vz^{(2)}v \cdots vz^{(2m)} v$.  Note that such an $x$ exists by irreducibility.  Now let $a$ be a letter such that $y=xa\in \LLL$.  By LAS, there are $x' \in B_m(x)$ and $y'\in B_m(y)$ such that $x'u, y'u\in \LLL$.  Thus at least $m+1$ of the instances of $v$ in $x$ survive in $x'$, and similarly in $y'$.  In particular there is some instance of $v$ that survives in both $x'$ and $y'$; say this occurs as $x_{[i,i+|v|)}$. Then, we can take $w = v x'_{[i+|v|,|x|]} u \in \LLL$ and $w' = v y'_{[i+|v|,|x|+1]}u\in \LLL$; by definition both of these words are in $\GGG$, and their lengths clearly differ by $1$.  
This establishes \eqref{eqn:gcd1} and completes the proof of Theorem \ref{thm:las-main}, modulo the derivation in the next section of Theorem \ref{thm:towers} from the results in \cite{climenhaga1}.

\subsection{Proof of Theorem \ref{thm:towers}}\label{sec:derive}

Theorem \ref{thm:towers} is a consequence of results in \cite{climenhaga1}.  Since it is not stated in exactly this form there, we explain the necessary steps.  First note that \cite{climenhaga1} is given in terms of a potential function $\ph$, while we consider only MMEs, which correspond to the case $\ph=0$.  Also note that our condition \ref{spec} is denoted by [\textbf{I}$'$] in \cite[Theorem 3.2]{climenhaga1}.

The main result in that paper is \cite[Theorem 1.1]{climenhaga1}, which is given in terms of \ref{spec}, \ref{gap}, and a condition [\textbf{III}] that we do not state here, but which implies both \ref{inter} and \ref{union}.  It is shown there that if \ref{spec}, \ref{gap}, and [\textbf{III}] hold, then $X$ has a unique MME $\mu$, and that $\mu$ satisfies the stronger statistical properties listed in Theorem \ref{thm:las-main}, with two caveats:
\begin{itemize}
\item the result in \cite{climenhaga1} allows for a period $d$ in the Bernoulli property and exponential decay of correlations;
\item the definition of periodic orbit measures in \cite{climenhaga1} involves a sum over all orbits of length $\leq n$, instead of length exactly $n$.
\end{itemize}
Thus to prove Theorem \ref{thm:towers}, we must explain three things:
\begin{enumerate}
\item why \ref{inter} and \ref{union} are sufficient to replace [\textbf{III}];
\item why $d=1$ in our setting;
\item why our definition of periodic orbit measures still gives the equidistribution result.
\end{enumerate}
This requires a more careful examination of the results in \cite[\S3]{climenhaga1}, which establish \cite[Theorem 1.1]{climenhaga1}.  In \cite[Theorem 3.2]{climenhaga1}, it is shown that \ref{spec}, \ref{gap}, \ref{inter}, and \ref{union} guarantee existence of $\FFF \subset \LLL(X)$ such that the following are true.
\begin{itemize}
\item $\FFF$ is closed under concatenation: if $v,w\in \FFF$, then $vw\in \FFF$.  In particular, $\FFF$ satisfies \ref{spec} with $\tau=0$.
\item $\FFF$ satisfies \ref{gap}; there are $\mathcal{E}^p,\mathcal{E}^s \subset \LLL(X)$ such that $h(\mathcal{E}^p \cup \mathcal{E}^s \cup (\LLL(X) \setminus \mathcal{E}^p \FFF\mathcal{E}^s)) < h(X)$.
\item $\gcd \{|v| : v\in \FFF \} = \gcd \{|w| + \tau : w\in \GGG\}$.
\end{itemize}
In fact, \cite[Theorem 3.2]{climenhaga1} guarantees two further conditions [\textbf{II}$'$] and [\textbf{III}$^*$], which we do not state here but which are used in \cite[Theorem 3.1]{climenhaga1} to give the conclusions of \cite[Theorem 1.1]{climenhaga1} with $d=\gcd \{|v| : v\in \FFF \} = \gcd \{|w| + \tau : w\in \GGG\}$.  This demonstrates that under  the hypotheses of Theorem \ref{thm:towers}, we have $d=1$, and moreover that \ref{inter} and \ref{union} can replace [\textbf{III}] in \cite[Theorem 1.1]{climenhaga1}, which deals with the first two issues raised above.

It only remains to address the issue concerning periodic orbit measures.
By \cite[Lemma 4.5]{climenhaga1} applied to $\FFF$, there are $Q>0$ and $K,N\in \NN$ such that for $n \geq K$ there is $j\in [n-N,n]$ with $|\FFF_j| \geq Q e^{jh(X)}$.  
Since $\gcd\{|v| : v\in \FFF \} = 1$ and since $\FFF$ is closed under concatenation, there is $M\in \NN$ such that $\FFF_m\neq\emptyset$ for all $m\geq M$.  Given $n\geq K+M+N$, there is $j\in [n-M-N,n-M]$ such that
\[
|\FFF_j| \geq Q e^{jh(X)} \geq Q e^{(n-M-N)h(X)}.
\]
Now $\FFF_j \FFF_{n-j} \subset \FFF_n$ (by the fact that $\FFF$ is closed under concatenation) 
and $\FFF_{n-j} \neq \emptyset$ (since $n-j \geq M$), so
\begin{equation}\label{eqn:Fn}
|\Per_n(X)| \geq |\FFF_n| \geq |\FFF_j| \geq Q e^{-(M+N)h(X)} e^{nh(X)},
\end{equation}
where the first inequality uses the fact that every word $w\in \FFF_n$ generates a periodic point of period $n$ by the free concatenation property.  Now let $\mu_n = \frac 1{|\Per_n(X)|} \sum_{x\in \Per_n(X)} \delta_x$ be the periodic orbit measures defined in \eqref{eqn:mun}, and let $\tilde\mu = \lim_k \mu_{n_k}$ be any weak* limit point of the sequence $\mu_n$.  Note that $\sigma_* \mu_n = \mu_n$, so this is a case of the construction from Lemma \ref{lem:build-mme}. By (\ref{eqn:Fn}), clearly 
$\liminf \frac 1{n_k} \ln |\Per_{n_k}(X)| = h(X)$, and we conclude that $h(\tilde\mu) = h(X)$.  Since $\mu$ is the unique MME, this implies that $\tilde\mu=\mu$.  This holds for every limit point $\tilde\mu$ of $\mu$, and therefore $\mu_n \to \mu$. 

\section{Other proofs}\label{sec:other-pfs}

\subsection{Proof of Corollary \ref{thm:as-main}}\label{sec:corollary-pf}

The key observation for the proof of Corollary~\ref{thm:as-main} is that usual almost specification with $g \equiv 1$ implies either left or right almost specification.

\begin{lemma}\label{g=1}
If $X$ is irreducible and has almost specification with $g \equiv 1$, then either $X$ has left almost specification with $g \equiv 1$ or
$X$ has right almost specification with $g \equiv 1$. 
\end{lemma}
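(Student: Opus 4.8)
The plan is to establish the dichotomy by ruling out the simultaneous failure of both one-sided properties. So I would assume that $X$ is irreducible with AS for $g\equiv 1$, and suppose for contradiction that LAS \emph{and} RAS both fail. Failure of LAS gives words $a,b\in\mathcal{L}(X)$ such that $\bar{a}b\notin\mathcal{L}(X)$ for every $\bar{a}\in B_1(a)$, and failure of RAS gives words $c,d\in\mathcal{L}(X)$ such that $c\bar{d}\notin\mathcal{L}(X)$ for every $\bar{d}\in B_1(d)$. The goal is to combine these two obstructions into a single concatenation that AS cannot repair within its one-change-per-word budget.

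The first step is to package both obstructions into one word. Using irreducibility, I would choose $s$ with $W:=bsc\in\mathcal{L}(X)$. The useful feature of $W$ is that its prefix occupying positions $1,\dots,|b|$ equals $b$, while its suffix occupying positions $|b|+|s|+1,\dots,|W|$ equals $c$, and these two blocks are disjoint. I would then apply AS to the three words $a,W,d$, producing $\bar{a}\in B_1(a)$, $\bar{W}\in B_1(W)$, $\bar{d}\in B_1(d)$ with $\bar{a}\bar{W}\bar{d}\in\mathcal{L}(X)$; in particular $\bar{W}$ differs from $W$ in at most one coordinate, and both $\bar{a}\bar{W}$ and $\bar{W}\bar{d}$ lie in $\mathcal{L}(X)$ since they are subwords of a legal word.

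The heart of the argument is two localization claims that pin down where the single permitted change to $\bar{W}$ can occur. Because $\bar{a}\bar{W}\in\mathcal{L}(X)$ and the prefix of $W$ is $b$, the failure of LAS for $(a,b)$ forces any change in $\bar{W}$ to lie among the first $|b|$ coordinates: if instead $\bar{W}$ agreed with $W$ on all of positions $1,\dots,|b|$, then $\bar{a}b$ would be a prefix (hence a subword) of $\bar{a}\bar{W}$ and so lie in $\mathcal{L}(X)$, contradicting the choice of $(a,b)$. Symmetrically, since $\bar{W}\bar{d}\in\mathcal{L}(X)$ and the suffix of $W$ is $c$, the failure of RAS for $(c,d)$ forces any change in $\bar{W}$ to lie among the last $|c|$ coordinates, namely positions $|b|+|s|+1,\dots,|W|$; otherwise $c\bar{d}$ would be a suffix of $\bar{W}\bar{d}$ and hence in $\mathcal{L}(X)$. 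Both claims use only factoriality of $\mathcal{L}(X)$ and the respective one-sided failure.

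These conclusions are incompatible: $\bar{W}$ has at most one altered coordinate, yet that coordinate must lie simultaneously in the disjoint prefix block $\{1,\dots,|b|\}$ and suffix block $\{|b|+|s|+1,\dots,|W|\}$. This contradiction shows that LAS and RAS cannot both fail, which is the lemma. I expect the localization step to be the main obstacle, and the only genuinely substantive point: a priori, the single change AS is allowed to make to the middle word might be positioned so as to repair both boundary transitions at once, and the whole proof rests on using the one-sided failures (together with factoriality) to confine that change to the copy of $b$ on the left and to the copy of $c$ on the right. By contrast, the use of irreducibility to form $W=bsc$ and the verification that the prefix and suffix blocks are disjoint are routine.
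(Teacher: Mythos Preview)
Your argument is correct and is essentially the same as the paper's: both assume LAS and RAS fail, take witnesses $(a,b)$ and $(c,d)$ for the two failures, use irreducibility to build the middle word $W=bsc$ (the paper's $vwU$), and then show that AS applied to $a,W,d$ forces the single permitted change in $\bar W$ to land in both the $b$-prefix block and the $c$-suffix block, which is impossible. One minor phrasing point: your localization claims should read ``there must be a change in the first $|b|$ coordinates'' (and symmetrically on the right), not ``any change must lie there''; your justification already establishes the correct statement.
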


\begin{proof}
Suppose that $X$ is an irreducible subshift which has neither LAS with $g \equiv 1$ nor RAS with $g \equiv 1$. Then, there exist $u,v \in \mathcal{L}(X)$ for which $u'v \notin \mathcal{L}(X)$ for all $u' \in B_1(u)$, and there exist $U,V \in \mathcal{L}(X)$ for which $UV' \notin \mathcal{L}(X)$ for all $V' \in B_1(V)$. Use irreducibility to construct a word of the form $vwU \in \mathcal{L}(X)$.

We now claim that $X$ cannot have almost specification with $g \equiv 1$. Choose any $u' \in B_1(u)$ and $V' \in B_1(V)$. Then $u'v, UV' \notin \mathcal{L}(X)$. Then, if $y$ has the same length as $vwU$ and $u'yV' \in \mathcal{L}(X)$, both the initial $v$ and terminal $U$ from $vwU$ must have been changed in $y$. This means that $y \notin B_1(vwU)$, and so $X$ does not have almost specification with $g \equiv 1$.

By contrapositive, if $X$ is irreducible and has almost specification with $g \equiv 1$, it must have either LAS or RAS with $g \equiv 1$, completing the proof.
\end{proof}

We also need the following analogue of Theorem \ref{measure-center}.

\begin{theorem}\label{measure-center-2}
If $X$ has almost specification with mistake function $g(n)$, then its measure center $\hat{X}$ is irreducible and has almost specification with mistake function $g(n)$.
\end{theorem}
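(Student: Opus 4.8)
The plan is to mirror the proof of Theorem~\ref{measure-center} as closely as possible, since the statement and conclusion are nearly identical with ``LAS'' replaced by ``almost specification (AS)''. The overall strategy is unchanged: first establish a one-sided reachability lemma (the analogue of Lemma~\ref{lem:u-to-v}) showing that every word in $\LLL(\hat X)$ can be followed, after a bounded gap, by any word of $\LLL(X)$; then iterate this to build a single point whose orbit measure witnesses that a concatenation $uz\bar u v$ lies in $\LLL(\hat X)$, thereby proving irreducibility of $\hat X$. The AS property for $\hat X$ then follows from the same compactness/limit-measure argument, since a suitable subword of the word we produce is seen to be in $\LLL(\hat X)$.

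First I would prove the reachability lemma. As before, for $u\in \LLL(\hat X)$ pick an ergodic $\mu$ with $\mu([u])>0$, set $\epsilon=\tfrac12\mu([u])$, and use the ergodic theorem to find, for all large $N$, a word $w\in \LLL_N(X)$ in which $u$ occurs at least $\epsilon N$ times. Choosing $N$ with $g(N)<\epsilon N/|u|$ guarantees that any $\bar w\in B_{g(|w|)}(w)$ still contains $u$. The one genuine difference from the LAS case is the final step: LAS let us apply the gap function only to the left word $w$, keeping $v$ fixed, whereas AS applies $g$ to \emph{both} words being concatenated. So when I invoke AS on the pair $(w,v)$ I obtain $\bar w\in B_{g(|w|)}(w)$ \emph{and} $\bar v\in B_{g(|v|)}(v)$ with $\bar w\bar v\in \LLL(X)$. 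Since $\bar w=yuz$ for some $y,z$, this yields $uz\bar v\in \LLL(X)$. This is slightly weaker than in Lemma~\ref{lem:u-to-v}, because the trailing word has been perturbed to $\bar v$; I would record the lemma in the correspondingly weakened form, namely that for every $u\in\LLL(\hat X)$ there is $N_u$ such that for every $v$ there exist $z$ with $|z|\le N_u$ and $\bar v\in B_{g(|v|)}(v)$ with $uz\bar v\in\LLL(X)$.

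Next I would run the iteration exactly as in Theorem~\ref{measure-center}, alternately reaching forward from $u$ to (a perturbation of) $v$ and from $v$ back to $u$, to build an infinite left-infinite sequence, pass to a subsequence along which a fixed connecting word $z$ recurs with positive upper density, and extract a weak* limit measure $\mu$ with $\mu$ positive on the relevant cylinder. This shows the relevant concatenation lies in $\LLL(\hat X)$, giving irreducibility; and since $\hat X$ inherits its invariant measures from $X$, the AS property for $\hat X$ follows because any words we wish to glue in $\hat X$ are in $\LLL(X)$, and the perturbed concatenation produced by AS for $X$ is witnessed (via the same limiting-measure construction) to lie in $\LLL(\hat X)$.

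The main obstacle I anticipate is precisely this bookkeeping with perturbations on \emph{both} sides. In the LAS proof the untouched right-hand word made it transparent that the target subword survived unperturbed into $\LLL(\hat X)$; here I must track that the perturbations applied by AS are compatible across the iterated gluing, so that a clean subword can still be identified in the final limit point and certified to be in $\LLL(\hat X)$. I expect this to be manageable: because $g$ is applied per-word and the words $u,v$ recur intact infinitely often in the constructed point (only bounded connecting words and bounded-size perturbations intervene), the limiting measure still charges a cylinder on an honest occurrence of the desired pattern, so AS for $\hat X$ goes through with the same mistake function $g(n)$.
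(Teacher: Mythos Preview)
Your adaptation of Lemma~\ref{lem:u-to-v} is fine, but the second paragraph --- ``run the iteration exactly as in Theorem~\ref{measure-center}'' --- does not go through, and this is a genuine gap rather than bookkeeping. The iteration in Theorem~\ref{measure-center} works precisely because LAS leaves the right-hand (accumulated) word untouched: the words $w^{(j)}$ nest as suffixes of one another, so a left-infinite limit point exists and every occurrence of $u$ or $v$ placed at some stage survives forever. With AS, your weakened lemma gives $u z \bar W \in \LLL(X)$ where $\bar W \in B_{g(|W|)}(W)$ for the \emph{entire} accumulated word $W$. Thus at each step the previously built word is perturbed by $g(|W|)$, which grows with $|W|$; the words no longer nest, the putative limit point is not well-defined, and there is no reason for the $u$'s and $v$'s you placed earlier to persist. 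Your claim that ``only bounded-size perturbations intervene'' is exactly what fails when $g$ is unbounded.

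The paper avoids this by abandoning the iterative scheme altogether. For almost specification of $\hat X$: given $w^{(1)},\dots,w^{(k)}\in\LLL(\hat X)$, extend the list periodically and apply AS for $X$ once to $w^{(1)},\dots,w^{(nk)}$, producing $\bar w^{(n,1)}\cdots\bar w^{(n,nk)}\in\LLL(X)$ with each $\bar w^{(n,i)}\in B_{g(|w^{(i)}|)}(w^{(i)})$. Passing to a limit along $n$, the resulting point decomposes as an infinite concatenation of ``blocks'' $\mathbf{v}^{(m)} = v^{(m,1)}\cdots v^{(m,k)}$ with $v^{(m,r)}\in B_{g(|w^{(r)}|)}(w^{(r)})$; since there are only finitely many possible blocks, one of them recurs with positive upper density, hence lies in $\LLL(\hat X)$. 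The key point is that AS is applied to the full list in one shot, so each word is perturbed exactly once, by $g$ of its own fixed length. For irreducibility the paper cites \cite{WOC16,kwietniaketal} rather than arguing directly; your reachability lemma could be combined with the same one-shot periodic-list idea (using words $w,w'$ densely containing $u,v$) to give a self-contained proof, but not via the LAS-style iteration you propose.
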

\begin{proof}
Since $X$ has almost specification, it follows from \cite[Theorem 6.7]{WOC16} that $\hat{X}$ does as well.  Then irreducibility follows from \cite[Theorems 3.5, 3.7, and 4.3]{kwietniaketal}.  Note that the last of these requires an invariant measure whose support is $\hat{X}$; to build such a measure, enumerate the elements of $\LLL(\hat{X})$ as $v^{(1)},v^{(2)},\dots$, choose invariant measures $\mu_n$ with $\mu_n([v^{(n)}])>0$, and let $\mu = \sum_{n=1}^\infty 2^{-n} \mu_n$.

Because \cite[Theorem 6.7]{WOC16} does not explicitly state the mistake function for $\hat{X}$, we give a direct proof that $\hat{X}$ has almost specification with mistake function $g(n)$.
Given $w^{(1)},\dots,w^{(k)}\in \LLL(\hat{X})$, let $w^{(mk+r)}=w^{(r)}$ for all $m\in \NN$ and $r\in \{1,\dots, k\}$; for each $n$, we apply the almost specification property of $X$ to $w^{(1)},\dots,w^{(nk)}$ to obtain $x^{(n)} \in X$ such that $x^{(n)}_{[1,\infty)} = \bar{w}^{(n,1)}\dots\bar{w}^{(n,nk)}$ where $\bar{w}^{(n,i)} \in B_{g(|w^{(i)}|)}(w^{(i)})$ for each $1\leq i\leq nk$.  By compactness there is $n_j\to\infty$ such that $x=\lim_{j\to\infty} x^{(n_j)}\in X$ exists.  Moreover, we have $x_{[1,\infty)} = \mathbf{v}^{(1)} \mathbf{v}^{(2)} \cdots$ for some $\mathbf{v}^{(m)} = v^{(m,1)} v^{(m,2)} \cdots v^{(m,k)}$ with $v^{(m,r)}\in B_{g(|w^{(r)}|)}(w^{(r)})$.  Because there are only finitely many possibilities for each $\mathbf{v}^{(m)}$, there is $\mathbf{\bar v} = \bar{v}^{(1)} \cdots \bar{v}^{(k)}$ such that $M = \{m\in \NN : \mathbf{v}^{(m)} = \mathbf{\bar v}\}$ has positive upper density.  By the same argument as in the proof of Theorem \ref{measure-center}, there is an ergodic measure $\mu$ with $\mu([\mathbf{\bar v}])>0$, and we conclude that $\mathbf{\bar v}\in \LLL(\hat X)$, which completes the proof of Theorem \ref{measure-center-2}.
\end{proof}

Now we complete the proof of Corollary~\ref{thm:as-main}.
By Theorem~\ref{measure-center-2}, we may assume without loss of generality that $X$ is irreducible. Then, by Theorem~\ref{g=1}, $X$ either has LAS with $g \equiv 1$ or RAS with $g \equiv 1$. In the former case, the proof is complete by applying Theorem~\ref{thm:las-main}. 

If $X$ has RAS with $g \equiv 1$, then define a map $\rho\colon A^\zz \to A^\zz$ by 
\[
\rho( \ldots x(-1) x(0) x(1) \ldots ) = \ldots x(1) x(0) x(-1) \ldots.
\]
Then $\rho(X)$ is a subshift, and since $X$ was irreducible and had RAS with $g \equiv 1$, it's clear that $\rho(X)$ is irreducible and has LAS with $g \equiv 1$. Therefore, $\rho(X)$ has a unique measure of maximal entropy satisfying the conclusions of Theorem~\ref{thm:las-main}. Since $\rho$ is bijective on periodic points of $X$ and induces an entropy-preserving bijection between $\mathcal{M}(X)$ and $\mathcal{M}(\rho(X))$, there is a corresponding unique measure of maximal entropy on $X$ with the same properties.

\subsection{Proof of Theorem \ref{doublelogex}}\label{sec:loglog-pf}

Having shown that LAS with bounded $g$ implies intrinsic ergodicity, it is natural to wonder whether this result can be strengthened, i.e.\ whether other upper bounds on $g$ may also imply uniqueness. We can now prove Theorem~\ref{doublelogex}, which shows that multiple MMEs can occur within subshifts with LAS for $g$ on the order of $\log \log n$. We will use the following technical lemma from \cite{pavlovAS}.

\begin{lemma}{\rm (\cite{pavlovAS}, Lemma 4.4)}\label{2span}
For every alphabet $A$ and positive integer $n$, there exists a set $U_n \subseteq A^n$ such that 
$|U_n| \leq \frac{16}{n^2} |A|^n$ and $U_n$ is $2$-spanning with respect to the Hamming metric, i.e. $\forall w \in A^n$, $\exists u \in U_n$ with $d_H(w,u) \leq 2$.
\end{lemma}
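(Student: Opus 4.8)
The plan is to exhibit $U_n$ as a covering code of Hamming radius $2$ whose density $|U_n|/|A|^n$ is of the optimal order $n^{-2}$ predicted by the sphere-covering heuristic: a Hamming ball of radius $2$ in $A^n$ has roughly $\binom{n}{2}(|A|-1)^2 \approx n^2|A|^2/2$ points, so a covering set of size on the order of $|A|^n/n^2$ is the best one can hope for. It is worth noting at the outset that a purely probabilistic or greedy construction will \emph{not} suffice here: choosing codewords at random and applying a union bound overshoots the sphere-covering order by a logarithmic factor of size $\sim n\log|A|$, so genuine structure is needed to reach a bound with an absolute constant such as $16$.

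The cleanest route is a product construction turning two radius-$1$ covering codes into one of radius $2$. Write $q=|A|$ and split the coordinates $\{1,\dots,n\}$ into halves $P_1,P_2$ of sizes $m_1=\lceil n/2\rceil$ and $m_2=\lfloor n/2\rfloor$. If $C_1\subseteq A^{P_1}$ and $C_2\subseteq A^{P_2}$ are each $1$-spanning (every half-word lies within Hamming distance $1$ of the code), then $U_n=C_1\times C_2$ is $2$-spanning: given any $w=(w_1,w_2)$, correct $w_1$ with one change and $w_2$ with one change, for a total of at most two. Since $C_1,C_2$ constrain disjoint coordinates, $|U_n|=|C_1|\,|C_2|$. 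Thus it suffices to build, on a block of length $m$, a $1$-spanning set of size of order $2q^m/m$; taking syndrome groups of size $M_1,M_2\approx n/4$ below and multiplying the two blocks yields $|U_n|\le q^n/(M_1M_2)$, with the stated constant $16=4\cdot 4$ emerging directly from the two groups of size $\approx n/4$.

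To build a $1$-spanning set on a block I would use a syndrome map. Identify $A$ with $\{0,1,\dots,q-1\}$, fix $M\approx m/2$, assign weights $a_i\in\mathbb Z_M$ to the coordinates, and set $s(u)=\sum_i a_i u_i \bmod M$. Averaging over the $M$ fibers, the smallest fiber $C=s^{-1}(c^*)$ has $|C|\le q^m/M$, which is of the required order. The code $C$ is $1$-spanning precisely when every shift $t\in\mathbb Z_M$ can be realized by changing a single coordinate of an arbitrary word, so that from any $u$ one reaches the fiber $c^*$ in one step. When $M\le q$ this is immediate: give one coordinate weight $1$, and realize the shift $t$ by resetting that coordinate, since its $q\ge M$ possible values represent every residue mod $M$. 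The delicate case is a small alphabet with $M>q$ (for instance $q=2$ and $n$ large), where each coordinate offers only a few possible shifts and an adversarial $u$ can defeat a naive choice of weights; there I would instead invoke a structured radius-$1$ covering code of near-optimal size $O(q^m/m)$. For prime-power $q$ this is a shortened and padded Hamming code, which is a genuine perfect $1$-covering and so has no adversarial gaps, and for general $q$ one uses the analogous constructions from covering-code theory.

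The main obstacle is exactly this small-alphabet regime $q<M$: attaining density $\Theta(n^{-2})$ here requires beating the logarithmic overhead inherent in random covering codes, which forces a genuinely algebraic construction and is where the real content of the lemma lies. Everything else is routine bookkeeping that the gap between the sphere-covering order and the loose constant $16$ comfortably absorbs: handling odd $n$, non-prime-power $q$, the cases where $M$ does not divide $q^m$ evenly, and the small values $n\le 4$ (for which $16/n^2\ge 1$, so one may simply take $U_n=A^n$).
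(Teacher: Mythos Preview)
The paper does not prove this lemma; it is quoted verbatim from \cite{pavlovAS} and used as a black box, so there is no ``paper's own proof'' to compare against here.

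Your strategy---split the coordinates in half and take a product of two radius-$1$ covering codes---is the natural one and is essentially how the lemma is established in \cite{pavlovAS}. The reduction to $1$-covering is correct, and for prime-power $q$ your use of (padded) Hamming codes does give $|C_i| < 2q^{m_i}/m_i$, yielding the right order with a constant close to~$16$.

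There is, however, a genuine gap in the write-up, and you have correctly located it yourself: the case of a non--prime-power alphabet with $q < m$. Your proposed fix, ``the analogous constructions from covering-code theory,'' is not a proof. The obvious attempts---embedding $A$ into a prime-power alphabet $A'$ of size $q' < 2q$ and taking a Hamming code there, or projecting $A$ onto a smaller prime-power alphabet---do not directly produce a $1$-covering code for $A^m$: in the first case $C' \cap A^m$ need not cover $A^m$, and in the second case a single change in $A$ need not move the projected symbol. One really does need an explicit construction (or a specific citation) at this point; the result is certainly available in the covering-codes literature, but as written this step is an appeal to authority rather than an argument. For the application in the present paper this is harmless, since one is free to choose the alphabet size $N > 2^{17}+4$ to be a prime power, but the lemma as stated is for arbitrary $A$.
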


We will actually exhibit a subshift with RAS rather than LAS; as argued in the proof of Corollary~\ref{thm:as-main}, this obviously can be turned into an LAS example by simply reflecting sequences about the origin. Our subshift will have alphabet
\[
A = \{-N, \ldots, -1, 1, \ldots, N\},
\]
where $N > 2^{17} + 4$. We begin by using Lemma~\ref{2span} to fix, for every $n$, a set $U_n \subseteq \{1, \ldots, N\}^n$ which is $2$-spanning in $\{1, \ldots, N\}^n$ and with
$|U_n| \leq \frac{16}{n^2} N^n$.
We will use $U_n$ to define certain collections $T^+ \subset \{1,\dots, N\}^*$ and $T^- \subset \{-1,\dots, -N\}^*$, and then take $X$ to be the coded shift generated by $T := T^+ \cup T^-$, i.e.\ the closure of the set of all bi-infinite concatenations of words from $T$.

For every $n \geq 4$, define $k = k(n) = \lfloor \log_2 \log_2 n \rfloor$ (note that $2^{2^k} \leq n < 2^{2^{k+1}}$) and define $T^+_n$ to be the set
\[
\{w \in \{1, \ldots, N\}^n \ : \ w(1) = 1, \forall i \in [0,k-1], w([2^{2^i} + 1, 2^{2^{i+1}}]) \in U_{2^{2^{i+1}} - 2^{2^i}}\}.
\]
Define $T^+ = \bigcup_n T^+_n$. We first note that the collection $T^+$ is prefix-closed, meaning that $w \in T^+$, 
$w = uv \Longrightarrow u \in T^+$; this follows immediately from the definition of the sets $T^+_n$. Also, for every $n$, $T^+_n$ is
$(1 + 2\lfloor \log_2 \log_2 n \rfloor)$-spanning in $\{1, \ldots, N\}^n$; given a word $w \in \{1, \ldots, N\}^n$, one only needs to change 
the first letter to $1$ and make at most two changes to each $w([2^{2^i} + 1, 2^{2^{i+1}}])$, $i \in [0,k-1]$, to place it in $U_{2^{2^{i+1}} - 2^{2^i}}$.

Finally, for every $n$, $|T_n^+|$ is bounded above by
\[
N^{n-1} \cdot \prod_{i=0}^{k-1} \frac{16}{(2^{2^{i+1}} - 2^{2^i})^2} \leq 
N^{n-1} \cdot \prod_{i=0}^{k-1} \frac{16}{(2^{2^{i+1} - 1})^2} = N^n \left(\frac{1}{N \cdot 2^{2^{k+2} - 6k - 4}}\right).
\]
Then for all $n$, 
\begin{equation}\label{examplebound0}
|T^+_n| \leq \frac{N^n}{N}.
\end{equation}

When $n \geq 2^{16}$, then $k \geq 4$, so $2^{k+2} - 6k - 4 > 1.125 \cdot 2^{k+1}$. This means that for $n \geq 2^{16}$, we get the stronger bound
\begin{equation}\label{examplebound}
|T^+_n| \leq \frac{N^n}{N \cdot n^{1.125}}.
\end{equation}

Now, for every $n$, define $T^-_n \subseteq \{-N, \ldots, -1\}^n$ by $T^-_n = -T^+_n = \{-w \ : \ w \in T^+_n\}$, and define
$T^- = \bigcup_n T^-_n$. Clearly $T^-$ is prefix-closed since $T^+$ is. Similarly, $T^-_n$ is 
$(1 + 2\lfloor \log_2 \log_2 n \rfloor)$-spanning in $\{-N, \ldots, -1\}^n$ and has the same cardinality bound as 
that of (\ref{examplebound}). Define, for every $n$, $T_n = T^-_n \cup T^+_n$; clearly $T_n$ is $(1 + 2\lfloor \log_2 \log_2 n \rfloor)$-spanning within the set of $n$-letter words with constant sign. Then, define $T = T^+ \cup T^- = \bigcup_n T_n$. We are finally prepared to define our subshift $X$; it is simply the coded system defined by $T$.

We first note that $\{1, \ldots, N\}^{\mathbb{Z}} \subset X$. To see this, note that for every $k$, the final 
$2^{2^{k+1}} - 2^{2^k} - 1 > k$ letters of words in $T^+_{2^{2^{k+1}} - 1}$ are completely unconstrained. In other words, every word in 
$\{1, \ldots, N\}^*$ is the suffix of some word in $T^+$, and so by taking limits as $k \rightarrow \infty$, we see that $\{1, \ldots, N\}^{\mathbb{Z}} \subset X$. Similarly, every word in $\{-N, \ldots, -1\}^*$ is the suffix of some word in $T^-$, and so 
$\{-N, \ldots, -1\}^{\mathbb{Z}} \subset X$. Trivially, this implies that $h(X) \geq \log N$. We will now bound $|\LL_n(X)|$ from above similarly to \cite{pavlovAS} to show that in fact $h(X) = \log N$.

For every $n$ and every $w \in \LL_n(X)$, we can decompose $w$ as $w = w^{(1)} \ldots w^{(k)}$, where $w^{(1)}$ is the suffix of a word in $T$, 
$w^{(2)}, \ldots, w^{(k-1)} \in T$, and $w^{(k)}$ is the prefix of a word in $T$. Recall that $T$ is prefix-closed, and that every word of constant sign is the suffix of some word in $T$. Therefore, we can rephrase by saying that $w^{(1)}$ has constant sign, and that 
$w^{(2)}, \ldots, w^{(k)} \in T$. Defining 
$n_i = |w^{(i)}|$ for $1 \leq i \leq k$, we see that
\begin{equation}\label{examplebound2}
\begin{aligned}
|\LL_n(X)| &\leq \sum_{k=1}^{n} \sum_{\sum_{i=1}^k n_i = n} 2N^{n_1} \prod_{i=2}^{k} |T_{n_i}|\\
&\leq 2N^n \sum_{k=1}^{n} \sum_{\sum_{i=1}^k n_i = n} \prod_{i=2}^{k} \frac{|T_{n_i}|}{N^{n_i}}
\leq 2N^n \sum_{k=1}^n \left( \sum_{j=1}^\infty \frac{|T_j|}{N^j} \right)^{k-1}.
\end{aligned}
\end{equation}
By (\ref{examplebound0}) and (\ref{examplebound}), we have
\[
\sum_{j=1}^\infty \frac{|T_j|}{N^j} \leq \sum_{j=1}^{2^{16}} \frac{2}{N} + \sum_{2^{16}+1}^{\infty} \frac{2}{N \cdot j^{1.125}} < \frac{2^{17}}{N} + \int_{2^{16}}^{\infty} \frac{2}{N \cdot x^{1.125}} \ dx = \frac{2^{17} + 4}{N}.
\]

Since $N > 2^{17} + 4$, we can define $\alpha := \frac{2^{17} + 4}{N} < 1$, and then (\ref{examplebound2}) shows that
$|\LL_n(X)| < 2N^n \frac{1}{1 - \alpha}$. Taking logs, dividing by $n$, and letting $n \rightarrow \infty$ shows that $h(X) \leq \log N$,
and so $h(X) = \log N$. This immediately implies that $X$ has at least two measures of maximal entropy (with disjoint supports), namely the uniform Bernoulli measures on the full shifts $\{1, \ldots, N\}^{\mathbb{Z}}$ and $\{-N, \ldots, -1\}^{\mathbb{Z}}$, both of which are contained in $X$.

It only remains to show that $X$ has RAS with $g(n) = 1 + \lfloor 2 \log_2 \log_2 n \rfloor$. To see this, consider any $v,w \in \LL(X)$. Decompose each into maximal words of constant sign as $v = v^{(1)} v^{(2)} \ldots v^{(m)}$ and $w = w^{(1)} w^{(2)} \ldots w^{(n)}$, i.e. every $v^{(j)}$ and $w^{(j)}$ has constant sign, which alternates as $j$ varies. Then, $v^{(1)}$ is a suffix of a word $t$ in $T$, and all $v^{(j)}$ for $j > 1$ are in $T$ (recall that $T$ is prefix-closed). Similarly, $w^{(j)}$ is in $T$ for $j > 1$. Let $\ell = |w^{(1)}|$. Since
$T_{\ell}$ is $(1 + 2\lfloor \log_2 \log_2 \ell \rfloor)$-spanning in $\{-N, \ldots, -1\}^{\ell} \cup \{1, \ldots, N\}^{\ell}$
and $w^{(1)}$ has constant sign, there exists $u^{(1)} \in B_{1 + 2\lfloor \log_2 \log_2 \ell \rfloor}(w^{(1)})$ which is also in $T$. 
Since it is a concatenation of words in $T$, $t v^{(2)} \ldots v^{(m)} u^{(1)} w^{(2)} \ldots w^{(n)} \in \mathcal{L}(X)$. 
Then its subword $v^{(1)} \ldots v^{(m)} u^{(1)} w^{(2)} \ldots w^{(n)}$ must also be in $\mathcal{L}(X)$.
Finally, since $d_H(u^{(1)} w^{(2)} \ldots w^{(m)}, w) = d_H(u^{(1)}, w^{(1)}) \leq 1 + 2 \lfloor \log_2 \log_2 \ell \rfloor \leq 
1 + 2 \log_2 \log_2 |w|$, we have shown that $X$ has RAS with $g(n) = 1 + 2 \lfloor \log_2 \log_2 n \rfloor$.

\subsection{Proof of Proposition \ref{prodfact}}\label{sec:factors-pf}

Now we prove that the property of having LAS with bounded $g$ is preserved under products and factors.

Suppose that $X, Y$, with alphabets $A,B$ respectively, have LAS with $g \equiv m$ and $g \equiv n$ respectively. Then $X \times Y$ is a subshift with
alphabet $A \times B$. Consider any words $w^{(1)}, w^{(2)} \in \mathcal{L}(X \times Y)$. Let $u^{(1)} \in A^*$ and $v^{(1)} \in B^*$ denote the first and second coordinates of $w^{(1)}$ respectively; by definition $u^{(1)} \in \mathcal{L}(X)$ and $v^{(1)} \in \mathcal{L}(Y)$. Similarly define $u^{(2)} \in \mathcal{L}(X)$ and $v^{(2)} \in \mathcal{L}(Y)$ via 
$w^{(2)}$. Then, by LAS of $X$, there exist $\bar{u}^{(1)} \in B_m(u^{(1)})$, $\bar{v}^{(1)} \in B_n(v^{(1)})$ so that $\bar{u}^{(1)} u^{(2)} \in \mathcal{L}(X)$ and $\bar{v}^{(1)} v^{(2)} \in \mathcal{L}(Y)$. 
Then, if we define $\bar{w}^{(1)}$ to have first and second coordinates $\bar{u}^{(1)}$ and $\bar{v}^{(1)}$, we get $\bar{w}^{(1)} w^{(2)} \in \mathcal{L}(X \times Y)$, and 
$\bar{w}^{(1)} \in B_{m + n}(w^{(1)})$. Therefore, $X \times Y$ has LAS with $g \equiv m + n$.

Now, consider a factor map $\phi$ on $X$. Then $\phi$ is a sliding block code with some radius $r$, i.e. $x([i-r,i+r])$ uniquely 
determines $(\phi(x))(i)$ for every $x \in X$, $i \in \mathbb{Z}$. Take any words $w^{(1)} \in \LL_{n_1}(\phi(X))$ and 
$w^{(2)} \in \LL_{n_2}(\phi(X))$. There then exist words $v^{(1)} \in \LL_{n_1 + 2r}(X)$, $v^{(2)} \in \LL_{n_2 + 2r}(X)$ so that
$\phi(v^{(1)}) = w^{(1)}$ and $\phi(v^{(2)}) = w^{(2)}$. Create $u^{(1)} \in \LL_{n_1}(X)$ by removing the final $2r$ letters of $v^{(1)}$. 
Then by definition of sliding block code, $p^{(1)} := \phi(u^{(1)})$ is a prefix of $w^{(1)}$ of length $n_1 - 2r$.

Then by LAS of $X$, there exists $\bar{u}^{(1)} \in B_m(u^{(1)})$ for which $\bar{u}^{(1)} v^{(2)} \in \LL_{n_1 + n_2 + 2r}(X)$. Define
$y = \phi(\bar{u}^{(1)} v^{(2)})$; then $y \in \LL_{n_1 + n_2}(\phi(X))$. Since $v^{(2)}$ is a suffix of $\bar{u}^{(1)} v^{(2)}$, and $\phi(v^{(2)}) = w^{(2)}$, 
$y = \phi(\bar{u}^{(1)} v^{(2)})$ has $w^{(2)}$ as a suffix. Also, since $\bar{u}^{(1)} \in B_m(u^{(1)})$, and $\phi$ has radius $r$,
$\bar{p}^{(1)} := \phi(\bar{u}^{(1)}) \in B_{m(2r+1)}(\phi(u^{(1)})) = B_{m(2r+1)}(p^{(1)})$. (The only differences in $\phi(\bar{u}^{(1)})$ and $\phi(u^{(1)})$ must be at locations within distance $r$ from a difference within $\bar{u}^{(1)}$ and $u^{(1)}$.) Since $\bar{u}^{(1)}$ is a prefix of $\bar{u}^{(1)} v^{(2)}$, 
$y = \phi(\bar{u}^{(1)} v^{(2)})$ has $\bar{p}^{(1)}$ as a prefix. But then we can write $y = \bar{p}^{(1)} x w^{(2)}$ for some $x \in \LL_{2r}(\phi(X))$. Since 
$\bar{p}^{(1)} \in B_{m(2r+1)}(p^{(1)})$, $\bar{p}^{(1)} x \in B_{2r + m(2r+1)}(w^{(1)})$. Since $y \in \mathcal{L}(\phi(X))$, we've proved that $\phi(X)$
has LAS with $g \equiv 2r + m(2r+1)$. 

\subsection{Proof of Lemma \ref{entbigger}}\label{sec:entropy-pf}

We will show that for a factorial set $\mathcal{D}$ of words, the entropy of $\mathcal{D}$ is the same as that of the subshift ``generated by'' $\mathcal{D}$.

Given a word $w\in A^*$, a sufficient condition to have $w\in \LLL(X(\DDD))$ is that there are infinitely many $k\in \NN$ and $u,v\in A^k$ such that $uwv\in \DDD$.  Given $i,j\in \NN$, consider the set
\[
\DDD_i^{(j)} = \{w\in \DDD_i : \text{there are } u,v\in \DDD_j \text{ such that } uwv\in \DDD\}.
\]
Note that by definition, for every $n \in \mathbb{N}$,
\[
\DDD_n = \DDD_n^{(0)} \supset \DDD_n^{(n)} \supset \DDD_n^{(2n)} \supset \cdots
\]
Since these sets are finite, this sequence stabilizes, i.e. there exists $K \in \mathbb{N}$ so that for all $k \geq K$, 
\begin{equation}\label{referee}
\DDD_n^{(kn)} = \bigcap_{j \in \mathbb{N}} \DDD_n^{(jn)} \subseteq \mathcal{L}_n(X(\mathcal{D})).
\end{equation}

Our goal is to prove that for any $n\in \NN$ and $\epsilon>0$, we have
\begin{equation}\label{eqn:Dnkn}
|\DDD_n^{(kn)}| \geq e^{n(h(\DDD) - 2\epsilon)}
\end{equation}
for all sufficiently large $k$; using this we will produce `enough' words in $\LLL_n(X(\DDD))$.

Since $\DDD$ is factorial, Lemma \ref{wordcount} gives $|\DDD_n| \geq e^{nh(\DDD)}$ for all $n$.  Fix $\epsilon>0$, and let $N = N(\epsilon)$ be such that
\begin{equation}\label{eqn:Dn}
e^{nh(\DDD)} \leq |\DDD_n| \leq e^{n(h(\DDD)+\epsilon)}
\end{equation}
for all $n\geq N$.  For any $k,n\in \NN$ we observe that if $w\in \DDD_{kn}^{(kn)}$, then
\[
w([1,n]), w([n+1,2n]), \dots, w([(k-1)n+1,kn]) \in \DDD_n^{(kn)}
\]
by factoriality of $\DDD$.  In particular, this gives
\begin{equation}\label{eqn:Dknkn}
|\DDD_{kn}^{(kn)}| \leq |\DDD_n^{(kn)}|^k.
\end{equation}
On the other hand, every $w\in \DDD_{3kn}$ has $w([kn+1,2kn]) \in \DDD_{kn}^{(kn)}$, and so
\[
e^{3knh(\DDD)}\leq
|\DDD_{3kn}| \leq |\DDD_{kn}|^2 |\DDD_{kn}^{(kn)}|
\leq e^{2kn(h(\DDD)+\epsilon)} |\DDD_n^{(kn)}|^k
\]
for every $k,n$ with $kn > N$, where the last inequality uses \eqref{eqn:Dn} and \eqref{eqn:Dknkn}.  Dividing by $e^{2kn(h(\DDD)+\epsilon)}$ and taking a $k$th root gives \eqref{eqn:Dnkn} for all $k > N/n$.

Now, by (\ref{referee}) and (\ref{eqn:Dnkn}),
\[
|\mathcal{L}_n(X(\mathcal{D}))| \geq \Big| \bigcap_{j \in \mathbb{N}} \DDD_n^{(jn)} \Big| \geq e^{n(h(\mathcal{D}) - 2\epsilon)}.
\]
Dividing by $n$, taking logs, and letting $n \rightarrow \infty$ yields $h(X(\DDD)) \geq h(\DDD) - 2\epsilon$.  Since $\epsilon>0$ was arbitrary, we have $h(X(\DDD)) \geq h(\DDD)$, which completes the proof of Lemma \ref{entbigger}.

\subsection{Unbounded mistake functions}\label{sec:unbdd}

As mentioned in Remark \ref{rmk:unbdd}, Theorem \ref{LASentmin} fails as soon as $g$ is unbounded.  To demonstrate this, we give a family of irreducible examples with LAS for arbitrary unbounded $g$ which are not entropy minimal, and in fact even have zero entropy.

Consider any unbounded $g\colon \mathbb{N} \rightarrow \mathbb{N}$ where $\frac{g(n)}{n} \rightarrow 0$ as $n \rightarrow \infty$. We may assume without loss of generality that $g$ is nondecreasing since any subshift with LAS with mistake function $g(n)$ also has LAS for any larger mistake function. Then, define $X$ to have alphabet $\{0,1\}$ and consist of all $x \in \{0,1\}^\zz$ such that for every $n$ and every $n$-letter subword $w$ of $x$, the number of $1$ symbols in $w$ is less than or equal to $g(n)$. (These shifts were defined in \cite{stanley} under the name \textbf{bounded density shifts.}) We note that $\mathcal{L}(X)$ is exactly the set of words $v$ where for every $n$ and every subword $w$ of length $n$, the number of $1$s in $w$ is less than or equal to $g(n)$; any such word $v$ is in $\mathcal{L}(X)$ since $0^{\infty} v 0^{\infty} \in X$.

We first show that $X$ has LAS with mistake function $g(n)$. For any $v,w \in \mathcal{L}(X)$, the number of $1$s in $v$ is at most 
$g(|v|)$, and so $v' = 0^{|v|} \in B_{g(|v|)}(v)$. Then, we claim that $v'w \in \mathcal{L}(X)$. To see this, note that for every subword $u$ of $v'w$, the number of $1$s in $u$ is equal to the number of $1$s in $u'$, the maximal subword of $u$ contained in $w$. Then since $w \in \mathcal{L}(X)$, this is less than or equal to $g(|u'|) \leq g(|u|)$, and so $v'w \in \mathcal{L}(X)$, completing the proof of LAS.

Now, we show that $X$ is irreducible. Consider any $v,w \in \mathcal{L}(X)$, and let $n \geq \max\{|v|, |w|\}$.  and without loss of generality assume that they have the same length $n$. Since $g$ is unbounded, there exists $N$ so that $g(N) \geq 2n$. Then, let
$u = v 0^N w$. Any subword $u'$ of $u$ which does not contain letters of both $v$ and $w$ is a subword of either $v 0^N$ or $0^N w$, and so has number of $1$s less than or equal to $g(|u'|)$ exactly as argued above. Any other subword $u''$ of $u$ contains letters of both $v$ and $w$, and so has length greater than $N$. The number of $1$s in $u''$ is then less than or equal to $2n \leq g(N) \leq g(|u''|)$. We've then shown that $u \in \mathcal{L}(X)$, proving irreducibility of $X$.

We also claim that $h(X) = 0$. To see this, note that since $\frac{g(n)}{n} \rightarrow 0$, the limiting frequency of $1$ symbols in every point of $x$ is zero. Therefore, an invariant measure $\mu$ on $X$ must have $\mu([1]) = 0$, and so the only such measure is the delta measure on the sequence of all $0$s. This measure clearly has entropy $0$, and so by the variational principle, $h(X) = 0$. Then, $X$ is not entropy minimal, since it contains the proper subshift containing only the point of all $0$s, which also has entropy $0$.



\bibliographystyle{plain}
\bibliography{leftASv2}

\end{document}